\documentclass[11pt]{article}

\usepackage{amsrefs}
\usepackage{amsmath,amssymb,amsthm}
\usepackage{enumerate,pxfonts,tikz}
\usepackage[all,cmtip]{xy}
\usepackage[applemac]{inputenc}
\usepackage[colorlinks=true, linkcolor=black, citecolor=black]{hyperref}
\usepackage{marvosym}

\usepackage[OT1]{fontenc}
\DeclareFontFamily{OT1}{pzc}{}
\DeclareFontShape{OT1}{pzc}{m}{it}{<-> s * [1.35] pzcmi7t}{}
\DeclareMathAlphabet{\mathcal}{OT1}{pzc}{m}{it}

\parindent 0pt
\parskip 7pt

\newcommand \Ad{\operatorname{Ad}}
\newcommand \al{\alpha}
\newcommand \adm{\mathrm{adm}}

\newcommand \aft{\circ}
\newcommand \be{\beta}
\newcommand \bs{\backslash}
\newcommand \C{{\mathbb C}}

\newcommand \CA{\mathcal{A}}

\newcommand \CD{\mathcal{D}}

\newcommand \CF{\mathcal{F}}

\newcommand \Cg{\mathcal{g}}

\newcommand \CM{\mathcal{M}}

\newcommand \Cn{\mathcal{n}}

\newcommand \CP{\mathcal{P}}

\newcommand \CR{\mathcal{R}}
\newcommand \CS{\mathcal{S}}

\newcommand \Cz{\mathcal{z}}
\newcommand \cusp{\mathrm{cusp}}

\newcommand \ds{\displaystyle}
\newcommand \Eig{\operatorname{Eig}}
\newcommand \Eis{\operatorname{Eis}}
\newcommand \End{\operatorname{End}}

\newcommand \fg{\mathrm{fg}}
\newcommand \Ga{\Gamma}
\newcommand \GL{\operatorname{GL}}
\newcommand \ga{\gamma}

\newcommand \Hom{\operatorname{Hom}}
\newcommand \Id{{\rm Id}}
\newcommand \la{\lambda}
\newcommand \La{\Lambda}
\newcommand \Lie{\operatorname{Lie}}

\newcommand \mqed{\tag*\qedhere}
\newcommand \N{{\mathbb N}}
\newcommand \ol{\overline}
\newcommand \om{\omega}
\newcommand \Om{\Omega}

\newcommand \pos{\mathrm{pos}}

\newcommand \R{\mathbb{R}}
\newcommand \SL{\operatorname{SL}}
\newcommand \sm{\smallsetminus}

\newcommand \strong{\mathrm{strong}}
\newcommand \supp{\operatorname{supp}}

\newcommand \ul{\underline}
\newcommand \vol{\operatorname{vol}}
\newcommand \weak{\mathrm{w-cusp}}
\newcommand \what{\widehat}
\newcommand \Z{\mathbb{Z}}

\renewcommand \1{{\bf1}}

\renewcommand \Pr{\operatorname{Pr}}

\renewcommand \({\big(}
\renewcommand \){\big)}
\renewcommand \[{\left(}
\renewcommand \]{\right)}

\newcommand{\e}
[1]{\emph{#1}\index{#1}}

\newcommand{\norm}
[1]{\left\|#1\right\|}

\renewcommand{\sp}
[1]{\left\langle #1\right\rangle}

\newcommand{\tto}
[1]{\stackrel{#1}{\longrightarrow}}

\newtheorem{theorem}{Theorem}[section]

\newtheorem{lemma}[theorem]{Lemma}
\newtheorem{corollary}[theorem]{Corollary}
\newtheorem{proposition}[theorem]{Proposition}

\theoremstyle{definition}
\newtheorem{definition}[theorem]{Definition}

\newtheorem{remark}[theorem]{Remark}

\begin{document}

\pagestyle{myheadings} \markright{SPECTRAL THEOREM}

\title{A spectral theorem for compact representations and non-unitary cusp forms}
\author{Anton Deitmar \footnote{This paper has been written during a stay at the Institute for Fundamental Mathematics at the NTT Research \& Development Center, Tokyo, Japan. The author thanks the Institute and its members for the warm hospitality he has received there.}}
\date{}
\maketitle

{\bf Abstract:}
We show that a compact representation of either a semisimple Lie group or a totally disconnected group has a filtration with irreducible subquotients of finite multiplicity.
In the Lie group case we show the stronger assertion, that it has 
an orthogonal decomposition into summands of finite lengths.
This generalises and  simplifies  a number of more special spectral theorems in \cites{Mueller, DeiMon, Venkov}.
We apply it to the case of cusp forms, thus settling the spectral theory for the space of non-unitary twisted cusp forms.

$ $

{\bf MSC classification:} {\bf 11F72}, 11F75, 22A25, 22D12, 22E46

$ $

{\bf Keywords:} non-unitary representation, cusp form

$ $ 

{\bf Data availability statement:} The author confirms that the data supporting the findings of this study are available within the article.

\newpage

\tableofcontents

\section*{Introduction}

A Hilbert representation $\pi$ of a locally compact group $G$ is called \e{compact}, if for every test function $f\in C_c^\infty(G)$ the operator 
$\pi(f)$ is compact.
If $\pi$ is compact and unitary, it follows that $\pi$ decomposes discretely, i.e., it is a direct sum of irreducible representations.
If $\pi$ is  not unitary, this form of discreteness is not to be expected in general.
One might however put forward the question, whether a compact $\pi$ must have a filtration with irreducible quotients as in \cite{Dei}.
In the present paper, this question is settled  for semi-simple Lie groups  and for totally disconnected groups. 
The result is then applied to the space of twisted  {cusp forms} $L^2_\cusp(\Ga\bs G,\om)$, which is a subspace of the section space of a certain vector bundle. More precisely, a given finite-dimensional representation $\om:\Ga\to\GL(V)$ gives rise to a flat vector bundle $E_\om$ over $\Ga\bs G$.
The sections of $E_\om$ can be considered as functions $\phi:G\to V$ with  $\phi(\ga x)=\om(\ga)\phi(x)$ for all $\ga\in\Ga$, $x\in G$.
If the  connection on $E_\om$ is a metric connection, or, equivalently, the representation $\om$ is unitary, then there is a natural definition of a Hilbert space $L^2(E_\om)$ of sections, acted upon by $G$, yielding a  unitary representation of $G$.
The spectral theory of this representation is, if $\Ga\bs G$ is compact, very similar to the untwisted case. 
In general, there are four possible situations

\begin{center}
\begin{tabular}{c|c|c}
&$\Ga\bs G$ compact & $\Ga/G$ non-compact\\
\hline
$\om$ unitary&(I)&(II)\\
\hline
$\om$ non-unitary&(III)&(IV)
\end{tabular}
\end{center}
The case of concern to this paper is (IV).
The case (I) differs  not much from the untwisted case.
For (II), a complete spectral analysis of the attached Laplacian has, for the group $\SL_2(\R)$, been given by A. Venkov \cite{Venkov}.
An investigation of the case (III) has been initiated by W. Müller \cite{Mueller}.
In the case (IV), not much is known.
It is not even clear how to define the $L^2$-space, as there is no preferred metric class and no $G$-invariant metric in general.
Only in the case when the $\Ga$-representation $\om$ extends to the group $G$, one has a canonical approach by transforming the sections $\phi$ to $\Ga$-invariant functions and use  classical results, see Section \ref{sect1}.
If the rank of $G$ is $\ge 2$ and $\Ga$ is an irreducible lattice, this is automatically the case by super-rigidity.

In the untwisted situation, the space $L^2(\Ga\bs G)$ is a direct sum of the space of cusp forms and a space spanned by Eisenstein series. 
For a complete spectral theory, one first extends the Eisenstein series to all of $\C$ and then identifies the Eisenstein space with residues of and integrals over Eisenstein series.
In the twisted case, the analytic continuation of Eisenstein series has been provided in some cases, the rest of the programme is open.
In \cite{Cameron}, Eisenstein series are used to construct families of metrics with favourable properties.
Explicit Fourier expansions  of  eigenvectors of the hyperbolic Laplacian have been computed  in \cite{FPR}.
Analytic extension of Eisenstein series have been given in \cites{DeiMonEis,FPEisenstein}. Finally, relations to mock modular forms are in \cite{MertensRaum}.
All of these are, however, only first steps of installing a  spectral theory for the twisted, non-compact case.
In the present paper we turn our attention to the space of cusp forms.
As a first problem, it is not even clear how to define cusp forms for non-unitary twists.

In this paper, we give two different definitions of cusp forms, depending on the behaviour of the twisting data $\om$ at the cusps. 
We call it the strong and weak cusp forms, since, in situations where both are defined, one condition is stronger than the other.
For both notions, we show that the respective representations are compact and then use the general spectral theorem mentioned above, to conclude that  
the space of cusp forms is an  orthogonal  direct sum of representations of finite length.
In the special case when the twisting representation $\om$ has no fixed points at the cusps, indeed every automorphic form is weakly cuspidal and thus the whole $L^2$-space decomposes discretely.

\section{The Spectral Theorem for Lie groups}\label{Sec1}

\begin{definition}
By a \e{Hilbert representation} of $G$ we mean a continuous representation on a Hilbert space.
In this paper, we consider Hilbert representations instead of Banach space representations, as it is sometimes required and fits to the applications.
\end{definition}

\begin{definition}
A representation of a locally compact group $G$ is called a \e{compact representation}, if the convolution algebra $C_c^\infty(G)$ acts by compact operators.
The space of test functions $C_c^\infty(G)$ has been defined by Bruhat in \cite{Bruhat}, see also \cite{Tao}.
For a Lie group, it is the usual space of test functions, for a totally disconnected group, it is the space of all locally constant, compactly supported functions.
Examples of compact representations are the right translation representation on $L^2(\Ga\bs G)$, where $\Ga$ is a co-compact lattice.
If $G$ is discrete, however, the only compact representations are the finite-dimensional representations.
\end{definition}

\begin{definition}
For the rest of the section, let $G$ be a semi-simple Lie group with finite center and finitely many components.
Let $K$ be a maximal compact subgroup.
Let $R$ be a representation on a Hilbert space $V$.
If the restriction to $K$ is unitary, we call the representation \e{$K$-unitary}. The latter can be achieved by changing the Hilbert structure to an equivalent one which is obtained by Haar integration over $K$. 
\end{definition}

\begin{definition}
A vector $v\in V$ for a representation $(\pi,V)$ is called \e{$K$-finite}, if the orbit $\pi(K)v$ spans a finite-dimensional space.
The space of $K$-finite vectors is denoted by $V_{K}$.
A vector $v$ is called \e{smooth}, if the map $x\mapsto\pi(x)v$ is infinitely differentiable.
The space of smooth vectors is denoted by $V^\infty$.
We finally write $V_K^\infty=V_K\cap V^\infty$.
\end{definition}

\begin{definition}\label{DefKendlich}
The group $K\times K$ acts on $C_c^\infty(G)$ by right and left translations respectively.
We say that $f\in C_c^\infty(G)$ is \e{$K$-finite}, if the span of the $K\times K$ orbit of $f$ is finite dimensional.
We write $C_{c,K}^\infty$ for the convolution algebra of $K$-finite functions in $C_c^\infty(G)$.
\end{definition}

\begin{definition}
Let $(\pi,V)$ be a representation of $G$.
The space $V_K^\infty$ is not $G$-stable in general, but it defines a $(\Cg,K)$-module 
\cite{Wallach}.
We call two representations \e{infinitesimally equivalent}, if their $(\Cg,K)$-modules are isomorphic.
\end{definition}

\begin{remark}
Note that, since $\eta$ is chosen right $K$-invariant,
the representation on $L^2(\Ga\bs G,\om)$ is $K$-unitary.
\end{remark}

\begin{definition}
A  $K$-unitary Hilbert representation of $G$ is called \e{admissible}, if every $K$-isotypical component is finite-dimensional.
Let  $\what G_\adm$ denote the set of all irreducible admissible representations of $G$ up to infinitesimal equivalence \cite{Wallach}.
Finally, let $\what G_\fg$ denote the set of those infinitesimal equivalence classes, whose $(\Cg,K)$-modules are finitely generated.
Each of such has a finite filtration with irreducible quotients  \cites{Knapp,Wallach}. 
\end{definition}

\begin{definition}\label{def2.8}
Let $\Cz$ denote the center of the universal enveloping algebra $U(\Cg)$. Then $\Cz\cong [D_1,\dots,D_R]$, where $R=\mathrm{rank}(G)$ and we can assume that $D_1=C$ the Casimir element.
Hence the set $\Hom_{\mathrm{alg}}(\Cz,\C)$ of algebra homomorphisms $\la:\Cz\to \C$ can be identified with $\C^R$.

For an operator $T$ on a linear space $V$ and $\la\in\C$ we write
$$
\Eig^\infty(T,\la)=\bigcup_{n=1}^\infty\ker(T-\la)^n
$$
and we call this space the \e{generalised eigenspace} for $\la\in\C$.
For a representation $(R,H)$ of $G$ and $\la\in\C^R$ let $H_\la$ denote the closure of the space
\begin{align*}
\Eig^\infty(\la)=\bigcap_{j=1}^R\Eig^\infty(C_j,\la_j)
\end{align*}\end{definition}

 \begin{theorem}\label{nextprop}.
 Let $G$ be a semisimple Lie group with finite center and finitely many components.
Let $(R,H)$ be a $K$-unitary compact Hilbert representation of $G$.
Then the representation $R$ is a direct sum of the subrepresentations $H_\la$, $\la\in\C^R$, i.e., the space
$
\ds\bigoplus_{\la\in\C^R}H_\la
$
is dense in $H$.
Only countably many of the $H_\la$ are non-zero and each $H_\la$ has a finite filtration with irreducible quotients.
\end{theorem}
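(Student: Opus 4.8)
The plan is to reduce the whole statement to the joint action of the generators $D_1,\dots,D_R$ of the center $\Cz$ on the dense space of $K$-finite smooth vectors, and to extract the required discreteness from compactness. First I would use that $R$ is $K$-unitary: restriction to $K$ decomposes $H$ orthogonally as $\widehat{\bigoplus}_{\tau\in\what K}H(\tau)$ into $K$-isotypical components, the projections $P_\tau$ are bounded (orthogonal), and the algebraic sum $H_K=\bigoplus_\tau H(\tau)$ of $K$-finite vectors is dense. Combining this with the density of smooth vectors (G\aa rding), one sees that $H_K^\infty$ is dense, and on it the $D_1,\dots,D_R$ act, commute with one another, and preserve each $H(\tau)$ since they are central. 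Thus $\Eig^\infty(\la)=\bigcap_{j}\Eig^\infty(D_j,\la_j)$ respects the $K$-type decomposition, and everything reduces to the joint action of the $D_j$ on each $H(\tau)^\infty:=H_K^\infty\cap H(\tau)$, which is dense in $H(\tau)$.

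The engine is compactness. For every $f\in C_c^\infty(G)$ the operator $\pi(f)$ is compact, and $D_j\,\pi(f)$ is again of the form $\pi(g)$ for a test function $g$, hence bounded; so $\pi(f)$ maps into the domain of each $D_j$ with $D_j\,\pi(f)$ bounded. Feeding in an approximate identity $\{\phi_n\}$ with $\pi(\phi_n)\to\Id$ strongly, and restricting to a fixed $H(\tau)$, I would deduce that the Casimir $C=D_1$ has \emph{compact resolvent} on $H(\tau)$: its spectrum is discrete, and each joint generalised eigenspace $H(\tau)_\la$ of $(D_1,\dots,D_R)$ is finite-dimensional with only countably many non-zero. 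This already gives finiteness of all the $K$-multiplicities attached to a single $\la$ and countability of the occurring $\la$.

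The main obstacle is \emph{completeness}: passing from ``discrete spectrum with finite-dimensional root spaces'' to ``the root spaces span a dense subspace.'' Since $R$ is non-unitary the operators $\pi(f)$ need not be normal, so Riesz--Schauder theory alone does not forbid a residual closed subspace on which the relevant compact operators have spectrum $\{0\}$ and no eigenvectors (the Volterra phenomenon); this is precisely where the classical self-adjoint argument for unitary compact representations breaks down. Here I would exploit that the \emph{whole} family $\{\pi(\phi_n)\}$ converges strongly to $\Id$: on each $H(\tau)$ one shows, by testing against this strong limit, that the orthogonal complement of $\overline{\bigoplus_\la H(\tau)_\la}$ inside $H(\tau)$ must vanish. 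Consequently $H(\tau)=\overline{\bigoplus_\la H(\tau)_\la}$ for every $\tau$, summing over $\tau$ shows that $\bigoplus_\la\Eig^\infty(\la)$ is dense, and, since distinct joint generalised eigenspaces of the commuting $D_j$ are independent on each $K$-type, the closures $H_\la=\overline{\Eig^\infty(\la)}$ form an independent family with $H_\la\cap H(\tau)=H(\tau)_\la$.

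It remains to see that each $H_\la$ has a finite filtration with irreducible quotients. The space $V_\la:=\Eig^\infty(\la)=\bigoplus_\tau H(\tau)_\la$ is a $(\Cg,K)$-module with all $K$-multiplicities finite on which $\Cz$ acts through the single generalised character $\la$. By the Harish-Chandra finiteness theorem there are only finitely many irreducible admissible classes with infinitesimal character $\la$, and each occurs in $V_\la$ with finite multiplicity, as it already exhausts a finite-dimensional $K$-type there; hence $V_\la$ has finite length, i.e.\ lies in $\what G_\fg$, and therefore admits a finite filtration with irreducible quotients \cites{Knapp,Wallach}. Passing to the closure $H_\la$ transports this filtration to one of $H_\la$ by closed $G$-stable subspaces with the same irreducible subquotients, completing the proof. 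I expect the strong-limit completeness step of the third paragraph to be the genuinely delicate part; the remainder is bookkeeping built on the compact-resolvent input and standard $(\Cg,K)$-module theory.
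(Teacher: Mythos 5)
There is a genuine gap, in fact two, and both occur exactly where your proposal replaces an argument by an assertion. The first is your claim that the Casimir has \emph{compact resolvent} on each $H(\tau)$. Since the representation is only $K$-unitary, $C$ restricted to $H(\tau)$ is a genuinely non-symmetric unbounded operator, so you must first show that its resolvent set is non-empty at all (for non-self-adjoint operators it can be empty), and even granting this, the compactness hypothesis applies only to operators of the form $R(g)$ with $g\in C_c^\infty(G)$; a resolvent $(C-z)^{-1}$ is never of this form (its kernel is not compactly supported), and the ingredients you list --- $R(\phi_n)\to\Id$ strongly and $D_jR(f)=R(g)$ bounded --- do not produce a norm-approximation of the resolvent by compact operators. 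The paper's proof supplies precisely the idea you are missing: by the finite-propagation-speed functional calculus of Cheeger--Gromov--Taylor \cite{CGT}, for $f$ entire with $f(z^2)$ Paley--Wiener the operator $f(C_\tau)$ has a smooth, \emph{compactly supported}, $G$-invariant kernel, hence \emph{is} a convolution operator $R(g)$ with $g\in C_c^\infty(G)(\tau,\tau)$, and is therefore compact by hypothesis. It is these operators, not resolvents, that the compactness assumption can see.

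The second gap is the completeness step, which you rightly call the crux but then dispatch with ``one shows, by testing against this strong limit, that the orthogonal complement \dots must vanish.'' That mechanism provably cannot work at this level of generality: with $V$ the Volterra operator on $L^2[0,1]$, the operators $T_n=nV(1+nV)^{-1}=\Id-(1+nV)^{-1}$ are compact, commute with one another, have spectrum $\{0\}$ (hence no eigenvectors whatsoever), and converge strongly to the identity. So a commuting family of compact operators tending strongly to $\Id$ can have empty point spectrum, and strong convergence alone can never force root spaces to be dense; this is exactly the Volterra phenomenon you set out to avoid. What excludes it in the paper is again the CGT calculus: the compact operators there are genuine functions of the Casimir satisfying the spectral mapping property $f(\sigma(\Delta))=\sigma(f(\Delta))$, so their non-zero eigenvalues and Riesz projections are tied to generalized eigenvectors of $C_\tau$ itself, which is the extra structure needed to rule out a residual quasi-nilpotent subspace. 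Your final paragraph (finite length of each $\Eig^\infty(\la)$ via Harish-Chandra finiteness, then passing to closures) is sound and agrees in substance with the paper's filtration argument, but paragraphs two and three, as written, restate the problem rather than solve it.
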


\begin{proof}
As $K$ acts unitarily, we get an orthogonal decomposition into $K$-isotypes
$$
H=\bigoplus_{\tau\in\what K}H(\tau).
$$
A given $\tau\in\what K$ defines a vector bundle $E_\tau=\(G\times V_\tau\)/K$. 
The sections can be viewed as functions $s:G\to V_\tau$ with $s(xk)=\tau(k^{-1})s(x).$
The Casimir operator $C$ acts on those.
Now $C=C_\pos+C_K$, where $C_K$ is the Casimir of $K$ and $C_\pos=\sum_{j=1}^nX_j^2$, where $(X_j)$ is an orthonormal basis of Killing orthocomplement of $\Lie(K)$ in $\Lie(G)$.
The operator $C_K$ acts by a scalar on the sections of $E_\tau$, whereas $C_\pos$ acts as $\Delta\ \Id$, where $\Delta$ is the Laplace operator of $X$ and $\Id$ is the identity section of the endomorphism bundle of $E_\tau$.
For distinction, we write $C_\tau$ for the differential operator induced by $C_\pos$ on $E_\tau$.
Let $f$ be an entire function such that $f(z^2)$ is a Payley-Wiener function.
Then $f(C_\tau)$ is an integral operator with smooth kernel and finite propagation speed \cite{CGT}. It also is $G$-invariant, hence given by right-convolution with some function in $C_c^\infty(G)(\tau,\tau)$ of $K$-type $\tau$ on both sides.
By our condition, the operator $f(C_\tau)$ is compact.
Hence every spectral value $0\ne\mu\in\sigma\(f(C_\tau)\)$ is an eigenvalue and 
the generalised eigenspace
$
\Eig^\infty\(f(C_\tau),\mu\)$
is finite-dimensional.
We have $\Eig^\infty(C_\tau,\mu)\subset\Eig^\infty\(f(C_\tau),f(\mu)\)$, and hence $\Eig^\infty(C_\tau,\la)$ is finite-dimensional for every $\la\in\C$.
Since $f\(\sigma(\Delta)\)=\sigma\(f(\Delta)\)$, these eigenvalues make up the spectrum of $\Delta$.

It follows that $H$ is the closure of the direct sum of finite dimensional spaces
$$
\bigoplus_{\substack{\tau\in\what K\\ \la_1\in\C}}\Eig^\infty(C|_{H(\tau)},\la_1).
$$
This space is acted upon by the algebra $\Cz$, respecting the decomposition. 
Hence the space equals
$$
\bigoplus_{\substack{\tau\in\what K\\ \la\in\C^R}}\Eig^\infty(\la)(\tau).
$$
We get
$$
H_\la=\ol{\bigoplus_{\tau\in\what K}\Eig^\infty(\la)(\tau)}
$$
Note that these spaces remain pairwise linearly independent, since by the orthogonality of $K$-types, the space $H_\la$ can be identified with a subspace of $\prod_{\tau\in\what K}\Eig^\infty(C_\tau,\la)\subset \prod_{\tau\in\what K}H(\tau)$, in which space they still are linearly independent.
The space $H_\la$ is closed, $G$-stable and admissible.
We claim that it has a finite filtration with irreducible quotients.
For this take some $\tau\in\what K$ with $H_\la(\tau)\ne 0$.
Let $W\subset H_\la(\tau)$ of minimal positive dimension such that there exists a subrepresentation $S\subset H_\la$ with $S(\tau)=W$.
Let $T\subset S$ be a maximal subrepresentation with $T(\tau)=0$.
Then $S/T$ is an irreducible subquotient of $H_\la$.
Repeat with $H_\la/S$ until you get a finite filtration
$$
0=F_0\subset F_1\subset\dots\subset F_{2k+1}
$$
such that $F_{2j}/F_{2j-1}$ is irreducible and $F_{2j-1}/F_{2j-1}$ does not have $K$-type $\tau$.
For these subquotients you repeat with another $\ga\in\what K$.
For given $\la$ there are only finitely many classes in $\what G_\adm$ having $\la$ as generalised eigenvalue.
Hence the process stops after trying finitely many classes $\tau$.
\end{proof}

\section{Filtrations}

The Spectral Theorem of Section \ref{Sec1} is the best one can hope for in the case of a non-unitary representation.
In general, one cannot expect a decomposition, but one might obtain a filtration with irreducible subquotients.

\begin{definition}
Two elements $a<b$ of an ordered set are called \e{neighbours}, if $[a,b]=\{a,b\}$, i.e., there are no elements between them.
For a linearly ordered set $L$ let $L_n$ denote the subset of all elements possessing a neighbour.
A \e{ladder} is a linearly ordered set $L$ such for every $a\in L\sm L_n$ one has
\begin{align*}
a&=\sup\big\{b\in L_n: b<a\big\}=\inf\big\{c\in L_n:a<c\big\}.
\end{align*}
Examples are $\N$, $\Z$, sections of the class of ordinal numbers or non-standard models of number theory. 
\end{definition}

\begin{definition}
Let $(R,V)$ be a Hilbert representation of a topological group $G$.
A \e{filtration} of $R$ is a family of subrepresentations  $(F_a)_{a\in L}$ for a partially ordered set $L$, such that $F_a\subset F_b$ if $a<b$.
It is called a \e{ladder filtration}, if $L$ is a ladder and $F_a\ne F_b$ for all $a<b$.
Finally, a ladder filtration is called a  \e{complete filtration}, if
\begin{enumerate}[\rm(a)]
\item If $a<b$ are neighbours, then $F_b/F_a$ is irreducible.
\item If $L=A\sqcup B$ is a decomposition with $A<B$, then 
$$
\ol{\bigcup_{a\in A}F_a}=\bigcap_{b\in B}F_b.
$$
\end{enumerate}
\end{definition}

\begin{definition}
Let a complete filtration $\CF=(F_a)_{a\in L}$ be given.
For an irreducible $\pi$ the \e{$\CF$-multiplicity} $N_\CF(\pi)$ is defined to be the supremum of the number of all neighboured pairs $a<b$ such that $\pi\cong F_b/F_a$.
\end{definition}

\begin{definition}
For an irreducible $\pi$ define the \e{multiplicity} $N(\pi)\in\N_0\sqcup\{\infty\}$ of $\pi$ in $V$ to be the supremum of the $k\in \N_0$ such that
\begin{align*}
\exists\ {F_1\subset\cdots\subset F_{2k}\text{ subrepresentations with }} F_{2j}/F_{2j-1}\cong\pi
\tag*{$(*)$}
\end{align*}for $1\le j\le k$.
We then have $N_\CF(\pi)\le N(\pi)$ for every irreducible $\pi$ and every complete filtration $\CF$.
A finite filtration as in $(*)$ is called a $\pi$-filtration.
\end{definition}

\begin{lemma}
Let $(R,V)$ be a Hilbert representation.
Suppose that every subquotient of $R$ possesses an irreducible subquotient.
Then $R$ admits a complete filtration and we have $N(\pi)=N_\CF(\pi)$ for every complete filtration $\CF$.
\end{lemma}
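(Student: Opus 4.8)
The plan is to realise the filtration as a maximal chain of closed subrepresentations and then to compare multiplicities by a Jordan--H\"older/Schreier-type argument adapted to that setting. First I would let $\mathcal S$ be the set of all chains (totally inclusion-ordered families) of closed $G$-subrepresentations of $V$ containing $0$ and $V$, ordered by inclusion, and apply Zorn's Lemma to obtain a maximal chain $\mathcal M$. Setting $L=\mathcal M$ with the inclusion order and $F_W=W$, maximality forces $\mathcal M$ to be closed under the two lattice operations $\ol{\sum}$ and $\bigcap$: for any subfamily, the closure of its sum and its intersection are again closed $G$-subrepresentations comparable to every member of $\mathcal M$, hence lie in $\mathcal M$ by maximality. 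This closure property supplies the suprema and infima required for the ladder axiom and yields the completeness axiom (b) at the relevant cuts. If $W\subsetneq W'$ are neighbours in $\mathcal M$, maximality leaves no closed $G$-subrepresentation strictly between them, so $W'/W$ is irreducible, which is axiom (a). The standing hypothesis enters precisely in verifying that $L$ is a ladder: I must show that every neighbourless element is a supremum, and an infimum, of neighboured ones, i.e. that jumps are order-dense. Given $s<a$, the nonzero subquotient $F_a/F_s$ has, by hypothesis, an irreducible subquotient, and maximality of $\mathcal M$ inside the interval $[s,a]$ then produces a neighbour pair of $\mathcal M$ strictly between $s$ and $a$; hence jumps are order-dense and $L$ is a ladder.

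The inequality $N_\CF(\pi)\le N(\pi)$ is immediate and already recorded, since any choice of $N_\CF(\pi)$ neighboured pairs with quotient $\pi$, arranged in increasing order, is a $\pi$-filtration. For the reverse inequality I would fix a $\pi$-filtration $0=G_0\subset G_1\subset\cdots\subset G_{2k}$ with $G_{2j}/G_{2j-1}\cong\pi$, and trace $\CF$ onto each irreducible layer $\pi_j=G_{2j}/G_{2j-1}$ by setting $\beta_j(a)=\ol{(F_a\cap G_{2j}+G_{2j-1})/G_{2j-1}}$, a closed subrepresentation of $\pi_j$. Because $\pi_j$ is irreducible, $\beta_j(a)\in\{0,\pi_j\}$; the monotone function $\beta_j$ is $0$ at the bottom and $\pi_j$ at the top of $L$, so it determines a cut $L=A_j\sqcup B_j$.

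I would then show that this cut is a genuine neighbour pair $a_j<b_j$ of $\CF$ and that $F_{b_j}/F_{a_j}\cong\pi$. That the cut is a jump rather than a limit cut follows from completeness (b): at a limit cut the value of $\beta_j$ forced from the $A_j$-side, namely $0$, and the value forced from the $B_j$-side, namely $\pi_j$, would have to agree. At the jump, writing $W=F_{b_j}\cap G_{2j}$, the two quotient maps $W\to F_{b_j}/F_{a_j}$ and $W\to\pi_j$ both have dense image, with kernels $F_{a_j}\cap G_{2j}\subseteq F_{b_j}\cap G_{2j-1}$, so a Goursat-type argument for irreducible representations identifies $F_{b_j}/F_{a_j}\cong\pi$. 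Finally the assignment $j\mapsto(a_j,b_j)$ is injective: inside the finite lattice generated by $F_{a_j},F_{b_j}$ and the finitely many $G_i$ the relevant sublattice is modular, so the Zassenhaus lemma applies locally and shows that two distinct layers cannot yield the same neighbour pair. Hence $\CF$ carries at least $k$ neighboured pairs with quotient $\pi$, giving $N(\pi)\le N_\CF(\pi)$ and, combined with the reverse inequality, the asserted equality.

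The hard part will be the step ruling out limit cuts, that is, showing that each trace $\beta_j$ is continuous with respect to the completeness operations $\ol{\sum}$ and $\bigcap$. The lattice of closed subrepresentations of a non-unitary Hilbert representation is not modular, so neither passage to the image in $\pi_j$ nor intersection with $G_{2j}$ commutes naively with closures of sums and with infinite intersections. The argument must therefore exploit the irreducibility of $\pi_j$, the finiteness of the competing $\pi$-filtration, and axiom (b) to confine every trace-jump to an actual neighbour pair of $\CF$; this is also the point, together with the density of jumps in the existence part, where the hypothesis that every subquotient possesses an irreducible subquotient is genuinely used.
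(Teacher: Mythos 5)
Much of your existence half is sound: Zorn applied to chains of closed subrepresentations, the closure of a maximal chain $\CM$ under $\ol{\textstyle\sum}$ and $\bigcap$, irreducibility of $F_b/F_a$ at neighbour pairs, and axiom (b) away from jumps are all correctly argued (and arguably cleaner than the paper's Zorn argument on ladder filtrations, since unions of chains of chains are chains). But the proposal breaks down at exactly the two places you flag yourself, and these are not technical footnotes --- they are the entire content of the lemma. In the multiplicity half, the assertion that the cut $(A_j,B_j)$ ``is a jump rather than a limit cut follows from completeness (b)'' is not a proof: axiom (b) only identifies the subspaces $W=\ol{\bigcup_{a\in A_j}F_a}=\bigcap_{b\in B_j}F_b$; it does not force a value of $\beta_j(W)$ from either side. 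From below, $F_a\cap G_{2j}\subseteq G_{2j-1}$ for all $a\in A_j$ gives only $\bigl(\bigcup_{a\in A_j}F_a\bigr)\cap G_{2j}\subseteq G_{2j-1}$, and intersection with $G_{2j}$ need not commute with passing to the closure; from above, a decreasing family of closed subspaces, each with dense image in $\pi_j$, can perfectly well have intersection with image $0$. So $\beta_j(W)$ is determined by neither side, no contradiction arises at a limit cut, and your final paragraph concedes precisely this (``the hard part will be\dots''). What you have submitted is therefore a programme whose decisive step is left open, preceded by a purported justification of that step which is invalid. The same continuity failure defeats the ladder property in the existence half: from an irreducible subquotient $Y/X$ of $F_a/F_s$ you get the cut $A=\{c: F_c\cap Y\subseteq X\}$, $B$ its complement, and this produces a neighbour pair only when $\ol{\bigcup_{c\in A}F_c}\ne\bigcap_{c\in B}F_c$; when the two coincide, maximality of $\CM$ yields no contradiction, because one cannot conclude $\ol{\bigcup_{c\in A}F_c}\cap Y\subseteq X$. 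Hence ``jumps are order-dense'' is not established, and with it neither is the existence of a complete filtration.

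Two further remarks. First, your injectivity step for $j\mapsto(a_j,b_j)$ rests on the claim that the finite sublattice generated by $F_{a_j}$, $F_{b_j}$ and the $G_i$ is modular; the lattice of closed subspaces of an infinite-dimensional Hilbert space is not modular, and there is no reason offered why this particular sublattice should be, so the local Zassenhaus/Goursat appeal is also unsupported. Second, note the structural contrast with the paper: for $N(\pi)=N_\CF(\pi)$ the paper never traces $\CF$ through infinite limit processes at all, but compares two \emph{finite} $\pi$-filtrations directly via the Schreier refinement theorem, so that only finitely many subrepresentations interact at a time. Whichever route you pursue, the step you must actually supply is the one you postponed: a mechanism --- necessarily exploiting the hypothesis that every subquotient has an irreducible subquotient, which in your multiplicity argument is invoked but never deployed --- converting a cut of $\CF$ detected by an irreducible layer into an honest neighbour pair of $\CF$.
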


\begin{proof}
We order the set $\CS$  of all ladder filtrations by setting $(F_a)_{a\in L}\le (G_b)_{b\in M}$ if $L$ is a subset of $M$ and $G_a=F_a$ for every $a\in L$.
The Lemma of Zorn yields a maximal element $\CF$, the completeness of which is easily checked.

The second assertion follows if we show that every $\pi$-filtration $F_1\subset\cdots\subset F_{2k}$ with length $k< N(\pi)$ can be extended to a $\pi$-filtration of length $k+1$.
So let $E_1\subset\dots\subset E_{2k+2}$ be a $\pi$ filtration of length $k+1$.
By the Schreier Refinement Theorem \cite{Baumslag}, these two filtration have a common refinement, under whose consecutive subquotients the $k+1$ copies of $\pi$ must be found. 
\end{proof}

\section{Totally disconnected groups}

In this section, let $G$ denote a totally disconnected locally compact group.
Then $G$ has a unit neighbourhood basis consisting of compact open subgroups.
The space $C_c^\infty(G)$ of test functions is the space of all compactly supported, locally constant functions on $G$.
For a compact open subgroup $K$, the indicator function $\1_K$ is in $C_c^\infty(G)$, hence for a compact representation $(R,V)$ the space $V^K$ of $K$-fixed vectors is finite-dimensional.

\begin{theorem}
Any compact Hilbert representation $(R,V)$ of $G$ has an irreducible subquotient.
It follows that $R$ possesses a complete filtration with finite multiplicities.
\end{theorem}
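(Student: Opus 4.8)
The plan is to prove the two assertions in sequence. The first and main task is to produce a single irreducible subquotient of $(R,V)$; the second assertion then follows by combining this with the general filtration lemma of Section 2.

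For the existence of an irreducible subquotient, I would exploit the structure of a totally disconnected group: $G$ has a neighbourhood basis of the identity consisting of compact open subgroups $K$, and for each such $K$ the idempotent $\1_K\in C_c^\infty(G)$ acts by a compact operator, so the space $V^K$ of $K$-fixed vectors is finite-dimensional. The plan is to pick some compact open $K$ with $V^K\ne 0$, which exists because the $K$-fixed spaces exhaust $V$ as $K$ shrinks (every smooth vector is fixed by some small enough $K$, and smooth vectors are dense). Then I would consider the Hecke algebra $\CH_K=\1_K*C_c^\infty(G)*\1_K$ acting on the finite-dimensional space $V^K$. The key idea is to run a minimality argument \emph{inside} the finite-dimensional module $V^K$: among all subrepresentations $S\subset V$ with $S^K\ne 0$, choose one for which $S^K$ has minimal positive dimension, and then choose a maximal subrepresentation $T\subset S$ with $T^K=0$. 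This mirrors exactly the minimality construction used in the proof of Theorem \ref{nextprop}.

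The claim is that $S/T$ is irreducible. If $U/T$ were a proper nonzero subrepresentation of $S/T$, then either $(U/T)^K=0$, contradicting maximality of $T$, or $U^K\ne 0$ with $\dim U^K<\dim S^K$, contradicting minimality of the dimension of $S^K$. The technical point I would need to verify is that taking $K$-fixed vectors is compatible with quotients, i.e. $(S/T)^K=S^K/T^K$; this is the standard exactness of the functor $V\mapsto V^K=\1_K V$ on smooth representations, valid because $\1_K$ is an idempotent and passing to $K$-invariants is an exact functor in the smooth setting. With this, the dimension bookkeeping closes the argument and $S/T$ is an irreducible subquotient of $R$.

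For the second assertion, I would observe that the irreducible-subquotient property established above is inherited by every subquotient of $R$: any subquotient of a compact representation is again a compact representation of $G$ (the operators $\pi(f)$ remain compact after passing to closed invariant subspaces and quotients), so the same argument produces an irreducible subquotient inside it. This is exactly the hypothesis of the filtration lemma in Section 2, which then yields a complete filtration $\CF$ with $N(\pi)=N_\CF(\pi)$ for every irreducible $\pi$. Finiteness of the multiplicities finally follows from the fact that for each fixed compact open $K$ the space $V^K$ is finite-dimensional, so only finitely many subquotients in the filtration can have a nonzero $K$-fixed vector; since every irreducible smooth $\pi$ has $\pi^K\ne 0$ for all sufficiently small $K$, each irreducible occurs only finitely often. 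The main obstacle I anticipate is the exactness statement $(S/T)^K=S^K/T^K$ together with verifying that compactness is genuinely inherited by subquotients; everything else is a direct transcription of the Lie-group argument into the Hecke-algebra language.
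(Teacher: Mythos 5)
Your overall strategy---produce an irreducible subquotient via $K$-fixed vectors and Zorn's lemma, then invoke the filtration lemma of Section 2 and bound multiplicities by $\dim V^K<\infty$---is the same as the paper's, but your irreducibility argument has a genuine gap in its second case. If $U/T$ is a proper nonzero subrepresentation of $S/T$ with $(U/T)^K\ne 0$, exactness of $K$-invariants does give $U^K\ne 0$, but it does \emph{not} give $\dim U^K<\dim S^K$. On the contrary: since $U^K\subset S^K$ and $\dim S^K$ was chosen minimal among positive dimensions, minimality forces $U^K=S^K$, which is no contradiction at all. The failure is real, not presentational. Take $G=\GL_2(\Q_p)$, $K=\GL_2(\Z_p)$, and let $V$ be a reducible unramified principal series having the trivial representation as a subrepresentation with Steinberg quotient. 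Then $\dim V^K=1$, so $S=V$ is a legitimate choice of minimal positive $\dim S^K$; the only proper nonzero closed subrepresentation of $V$ is the trivial line, which has nonzero $K$-invariants, so the maximal $T$ with $T^K=0$ is $T=0$; yet $S/T=V$ is reducible, and its subrepresentation $U=\C$ satisfies $U^K=S^K$ and slips through your case analysis.

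The missing ingredient is exactly the first reduction in the paper's proof: replace the representation by the closure of the $G$-span of its $K$-fixed vectors. If you replace your $S$ by $S'=\ol{\langle G\cdot S^K\rangle}$ (which satisfies $(S')^K=S^K$, so minimality is preserved), then in the problematic case the equality $U^K=S^K$ gives $U\supset S^K$, hence $U\supset S'$ because $S'$ is generated by $S^K$, contradicting properness; in the example above this replacement shrinks $S=V$ to the trivial line. (The paper organizes the construction slightly differently, using maximality of $W=U^K$ over proper subrepresentations rather than minimality of dimension, but generation by $K$-fixed vectors is what closes the argument in either version.) Two smaller points: the existence of your maximal $T$ also requires Zorn together with continuity of the projection $\vol(K)^{-1}R(\1_K)$, to see that the closure of the union of a chain still has zero $K$-invariants---the paper spells out this verification for its analogous maximal element; and your remaining steps (compactness passing to subquotients, exactness of $K$-invariants, and the multiplicity bound) are sound and parallel the paper's, which proves finiteness of multiplicities via a $K_0$-invariant inner product and orthogonal complements, while your telescoping-by-exactness count works equally well.
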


\begin{proof}
Let $K$ be a compact open subgroup with $V^K\ne 0$.
We can replace $V$ with the closure $\sp{V^K}$ of the $G$-span of $V^K$ and so assume that the representation is generated by $V^K$.

Let $W\subset V^K$ be maximal with the property that there exists a subrepresentation $U\subset V$ with $U^K=W$.
Let $S$ be the set of all such subrepresentation spaces $U$.
It is ordered by inclusion.
Let $L\subset S$ be a linearly ordered subset and let
$$
B=\ol{\bigcup_{U\in L}U}.
$$
The operator $P=\frac1{\vol(K)}R(\1_K)$ is a continuous projection onto $V^K$.
By continuity of $P$ we get
\begin{align*}
B^K&=P\[\ol{\bigcup_{U\in L}U}\]=\ol{P\[{\bigcup_{U\in L}U}\]}=\ol{{\bigcup_{U\in L}P(U)}}=\ol{{\bigcup_{U\in L}W}}=W
\end{align*}
So $B$ lies in $S$ and is an upper bound to $L$.
By Zorn's lemma we infer that $S$ has a maximal element $U$.
We claim that $H=V/U$ is irreducible.
For this let $E\subset H$ be a subrepresentation.
If $H^K=0$, by maximality of $U$ we infer that $H=0$.
Otherwise, we have $H^K=V^K$ and hence $H=V$.

For the assertion on finite multiplicities let $K_0$ be a compact open subgroup.
By integration, the Hilbert structure on $V$ can be changed to an equivalent, $K_0$ invariant.
This means that $K_0$, and hence all its subgroups, act unitarily on $V$.
Let $K$ be an open subgroup of $K_0$.
and Let $E/F$ be a subquotient with $(E/F)^K\ne 0$.
Then $E=F\oplus F'$ as a $K$-invariant orthogonal sum.
Hence $(E/F)^K\cong (F')^K$ and therefore the number of subquotients with $K$-invariants is finite.
Letting run $K$ through a neighbourhood basis of unity, the claim follows.
\end{proof}

\section{Flat bundles and automorphic forms}\label{sect1}
\begin{definition}
Let $G$ be a connected semisimple Lie group with finite center.
Let $\Ga\subset G$ be a lattice, i.e., a discrete subgroup of finite covolume.
For simplicity, we will assume that $\Ga$ is torsion-free, a condition that can be achieved by switching to a finite index subgroup.
There is an equivalence of categories between the finite-dimensional complex representations $\om:\Ga\to\GL(V_\om)$ and flat vector bundles $E$ over $\Ga\bs G$, which are trivial over $G$.
A representation $\om$ is mapped to
the bundle $E=E_\om$ given by
\begin{align*}
\Ga\bs(G\times V_\om)&\to \Ga\bs G,\\
\Ga(x,v)&\mapsto \Ga x.
\end{align*}
The sections then are maps $s:\Ga\bs G\to \Ga\bs (G\times V_\om)$, $s(\ga x)=[x,\phi(x)]$, where the map $\phi$ is  \e{$\om$-automorphic}, i.e.,
$$
\phi(\ga x)=\om(\ga)\phi(x)
$$ 
 for all $\ga\in\Ga$, $x\in G$.
\end{definition}

\begin{definition}\label{def1.2}
Fix a maximal compact subgroup $K$, then $X=G/K$ is the attached  symmetric space.
We shall write $d(.,.)$ for the distance function on $X$.
\end{definition}

\begin{definition}
Installing a smooth $L^2$-structure on the space of sections of $E=E_\om$ is equivalent to giving a map $x\mapsto\sp{.,.}_x$ from $G$ to the set of all inner products on the space $V_\om$ such that
$$
\sp{\om(\ga)v,\om(\ga)w}_{\ga x}=\sp{v,w}_x.
$$
Recall that two metrics $\sp{.,.}$ and $\sp{.,.}'$ are called \e{equivalent} if there exists a constant $C>0$ such that
$
\frac1C\sp{v,v}\le\sp{v,v}'\le C\sp{v,v}
$
holds for all $v\in E_\om$.
We give a canonical construction yielding a well-defined equivalence class.
\end{definition}

\begin{lemma}\label{lemma1.5}
There exists a smooth function $u:G\to [0,1]$ with the following properties:
\begin{enumerate}[\rm(a)]
\item $
\#\big\{\ga\in\Ga:\supp u\cap \supp(u\aft\ga)\ne \emptyset\big\}<\infty
$.
\item $\ds
\sum_{\ga\in\Ga}u(\ga x)=1
$ for every $x\in G$, where the sum is locally finite.
\item $u(xk)=u(x)$ for $x\in G$ and $k\in K$.
\end{enumerate}
\end{lemma}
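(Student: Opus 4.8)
The plan is to build $u$ by descending to the symmetric space $X=G/K$, constructing there a smooth nonnegative bump supported on a fundamental set with finitely many neighbours, normalising it into a partition of unity, and finally pulling back to $G$. Write $p:G\to X$ for the projection and let $\Ga$ act on $X$ by $\ga\cdot(gK)=(\ga g)K$, so that $p$ is $\Ga$-equivariant, $p(\ga g)=\ga\cdot p(g)$. Any smooth function on $X$ then pulls back to a right $K$-invariant function on $G$, since $p(xk)=p(x)$; this yields property (c) for free. I therefore first produce $\ol u:X\to[0,1]$ smooth with $\sum_{\ga\in\Ga}\ol u(\ga\cdot x)=1$ and $\#\{\ga:\ga\,\supp\ol u\cap\supp\ol u\ne\emptyset\}<\infty$, and then set $u=\ol u\aft p$. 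Because $\supp u=p^{-1}(\supp\ol u)$ and $p$ is $\Ga$-equivariant and surjective, one has $\ga\,\supp u\cap\supp u\ne\emptyset$ iff $\ga\,\supp\ol u\cap\supp\ol u\ne\emptyset$, so the finiteness on $X$ transports to (a) on $G$ (after reindexing $\ga\mapsto\ga^{-1}$, as $\supp(u\aft\ga)=\ga^{-1}\supp u$).

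The geometric input, and the main obstacle, is the existence of an open set $D\subset X$ with $\Ga D=X$, with $\#\{\ga\in\Ga:\ga D\cap D\ne\emptyset\}<\infty$, and such that the family $\{\ga\,\ol D\}_{\ga\in\Ga}$ is locally finite. When $\Ga\bs G$ is compact this is elementary: take $D$ relatively compact, whereupon both finiteness statements follow from the proper discontinuity of the $\Ga$-action on $X$. In the non-cocompact case $D$ is necessarily not relatively compact, and the finite-neighbour condition is precisely the Siegel finiteness property. Its existence is furnished by reduction theory: $X$ is covered by finitely many $\Ga$-translated Siegel domains $\mathfrak S_1,\dots,\mathfrak S_r$, each with the Siegel property and generating a locally finite family of translates, so that $D$ may be taken to be a slight open thickening of $\bigcup_i\mathfrak S_i$, the thickening being chosen relatively compact fibrewise so as to preserve the Siegel finiteness. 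This reduction-theoretic step, which is where finite covolume enters, is the crux; everything else is soft.

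Granting $D$, I choose a smooth $h:X\to[0,1]$ with $\supp h\subset D$ and $h>0$ on an open $D_0\subset D$ still satisfying $\Ga D_0=X$. By local finiteness of $\{\ga\,\supp h\}$ the function $H(x)=\sum_{\ga\in\Ga}h(\ga\cdot x)$ is a locally finite sum of smooth functions, hence smooth, and it is $\Ga$-invariant by reindexing. Since $\Ga D_0=X$ and $h>0$ on $D_0$, some translate $\ga\cdot x$ lands in $D_0$ for every $x$, so $H(x)>0$ throughout. I then define
$$
\ol u(x)=\frac{h(x)}{H(x)},
$$
which is smooth and $[0,1]$-valued because $H(x)\ge h(x)\ge 0$ (keeping the $\ga=e$ term). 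The $\Ga$-invariance of $H$ gives $\ol u(\ga\cdot x)=h(\ga\cdot x)/H(x)$, whence $\sum_{\ga}\ol u(\ga\cdot x)=H(x)/H(x)=1$, and $\supp\ol u=\supp h\subset D$ inherits the finite-neighbour property. Pulling back, $u=\ol u\aft p$ is smooth, takes values in $[0,1]$, is right $K$-invariant, satisfies $\sum_{\ga}u(\ga x)=1$, and has $\supp u=p^{-1}(\supp\ol u)$ meeting only finitely many of its own translates, establishing (a), (b) and (c).
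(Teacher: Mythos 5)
Your proof is correct, and its skeleton is the same as the paper's: build a bump function on $X=G/K$ whose support meets only finitely many of its $\Ga$-translates, normalise it by the locally finite sum $\sum_{\ga}h(\ga\cdot x)$ to obtain a $\Ga$-partition of unity, and pull back to $G$, which gives right $K$-invariance for free. Where you genuinely differ is the geometric source of the finite-intersection property. The paper takes a Dirichlet fundamental domain $\CD$ centred at a point $x_0$, asserts that its boundary consists of finitely many pieces of geodesic hypersurfaces so that $\CD$ has only finitely many neighbours, chooses $\al\ge 1$ on $\CD$ with $\supp\al$ contained in $\CD$ together with those neighbours, and sets $u=\al/\phi$ with $\phi(x)=\sum_{\ga\in\Ga}\al(\ga x)$ --- exactly your normalisation. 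You instead invoke reduction theory: finitely many Siegel sets cover a fundamental set, and Siegel's finiteness theorem supplies both the finite self-intersection and the local finiteness of the family of translates. The trade-off is worth noting. The Dirichlet-domain route is intrinsic and makes no reference to arithmeticity, but the finite-sidedness it relies on is genuinely delicate for non-cocompact lattices in higher rank, and the paper asserts it without proof; your Siegel-set route rests on classical, well-documented finiteness theorems and is consistent with the reduction theory the paper itself uses later (Definition \ref{def2.1}), at the cost of formally requiring $\Ga$ arithmetic --- automatic in higher rank by Margulis, and easy in rank one --- which is the same caveat the paper records elsewhere.
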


\begin{proof}
Let $\CD\subset X$ be a Dirichlet fundamental domain for $\Ga\bs X$, i.e.,
$$
\CD=\big\{ x\in X:d(x_0,x)<d(x_0,\ga x)\ \forall_{\ga\in\Ga\sm\{1\}}\big\}
$$
for some point $x_0\in X$ called a \e{center} of $\CD$.
Then then boundary of $\CD$ consists of finitely many segments of geodesic hypersurfaces, hence has measure zero, and there exists a measurable set $R$ of representatives of $\Ga\bs X$ such that $\CD\subset R\subset\ol{\CD}$.
Choose a smooth function $\al:X\to [0,\infty)$ such that
$\al(z)\ge 1$ for $z\in\CD$ and the support of $\al$ lies in the union of $\CD$ and its finitely many neighbours.
Consider $\al$ as a $K$-invariant function on $G$.
Then the sum $\phi(x)=\sum_{\ga\in\Ga}\al(\ga x)$ is locally finite and defines a $\Ga$-invariant function $\ge 1$ everywhere.
The function $u(x)=\frac{\al(x)}{\phi(x)}$ has the desired properties.
\end{proof}

\begin{definition}
Let $u$ be as in Lemma \ref{lemma1.5} and choose an inner product $[.,.]$ on $V_\om$.
For $x\in G$ set
$$
\sp{v,w}_{u,x}=\sum_{\ga\in\Ga}u(\ga x)\ [\om(\ga)v,\om(\ga)w].
$$
\end{definition}

\begin{lemma}
The map $\sp{.,.}_{u,x}$ defines a smooth hermitean metric on $E_\om$.
If $u'$ is a second map as in Lemma \ref{lemma1.5}, then the two ensuing metrics are equivalent.
\end{lemma}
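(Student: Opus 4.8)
The plan is to verify first that $\sp{.,.}_{u,x}$ really defines a metric on $E_\om$, and then to compare two such metrics. For the first part the only non-formal point is the $\om$-equivariance demanded by the definition of a metric on $E_\om$, namely $\sp{\om(\delta)v,\om(\delta)w}_{u,\delta x}=\sp{v,w}_{u,x}$. I would check this by the substitution $\ga\mapsto\ga\delta$ in the defining sum:
\begin{align*}
\sp{\om(\delta)v,\om(\delta)w}_{u,\delta x}
&=\sum_{\ga\in\Ga}u(\ga\delta x)\,[\om(\ga\delta)v,\om(\ga\delta)w]\\
&=\sum_{\ga\in\Ga}u(\ga x)\,[\om(\ga)v,\om(\ga)w],
\end{align*}
which is $\sp{v,w}_{u,x}$. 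Hermitian symmetry and sesquilinearity are inherited termwise from $[.,.]$ because $u$ is real-valued, and positive-definiteness holds because $u\ge 0$, each $\om(\ga)$ is invertible, and $\sum_\ga u(\ga x)=1$ forces at least one strictly positive summand; hence $\sp{v,v}_{u,x}>0$ for $v\ne 0$. Smoothness is a local question, and by the local finiteness in Lemma~\ref{lemma1.5}(b) only finitely many $\ga$ contribute near any given point, so there the form is a finite sum of smooth terms.

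For the equivalence, let $u'$ be a second function as in Lemma~\ref{lemma1.5}; since any two inner products on the finite-dimensional $V_\om$ are equivalent, I may take the reference product $[.,.]$ to be the same for both. By symmetry it suffices to produce a constant $C>0$, independent of $x$ and $v$, with $\sp{v,v}_{u',x}\le C\,\sp{v,v}_{u,x}$. Set
$$
F_0=\{\ga\in\Ga:\supp u\cap\ga\supp u\ne\emptyset\},\qquad F_1=\{\eta\in\Ga:\supp u'\cap\eta\supp u\ne\emptyset\}.
$$
Lemma~\ref{lemma1.5}(a) gives $\#F_0<\infty$, and $\#F_1<\infty$ by the argument below. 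First, if $u(\ga_1 x),u(\ga_2 x)\ne 0$ then $\supp u\cap\ga_1\ga_2^{-1}\supp u\ne\emptyset$, so $\ga_1\ga_2^{-1}\in F_0$; thus at most $\#F_0$ summands of $\sum_\ga u(\ga x)=1$ are non-zero, and some $\ga_0=\ga_0(x)$ satisfies $u(\ga_0 x)\ge 1/\#F_0$. For any $\delta$ with $u'(\delta x)\ne 0$ we have $\delta x\in\supp u'$ and $\delta x\in\delta\ga_0^{-1}\supp u$, so $\delta\ga_0^{-1}\in F_1$ and therefore, writing $w=\om(\ga_0)v$,
$$
[\om(\delta)v,\om(\delta)v]=[\om(\delta\ga_0^{-1})w,\om(\delta\ga_0^{-1})w]\le M\,[w,w],\qquad M:=\max_{\eta\in F_1}\norm{\om(\eta)}^2.
$$
Summing against $u'(\delta x)$ and using $\sum_\delta u'(\delta x)=1$ gives $\sp{v,v}_{u',x}\le M\,[w,w]$, while $\sp{v,v}_{u,x}\ge u(\ga_0 x)[w,w]\ge [w,w]/\#F_0$; hence $C=M\,\#F_0$ works.

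The main obstacle is the finiteness of $F_1$, the one place where the geometry of the non-compact quotient enters. Since $\supp u$ and $\supp u'$ are right-$K$-invariant and, by the construction in Lemma~\ref{lemma1.5}, their images in $X$ lie in finite unions $\bigcup_{\ga\in N}\ga\,\ol{\CD}$ and $\bigcup_{\ga\in N'}\ga\,\ol{\CD'}$ of closed Dirichlet tiles, the relation $\supp u'\cap\eta\supp u\ne\emptyset$ forces $\ol{\CD'}\cap\mu\,\ol{\CD}\ne\emptyset$ for $\mu=(\ga')^{-1}\eta\ga$ with $\ga'\in N'$, $\ga\in N$; so it suffices to see that $\{\mu\in\Ga:\ol{\CD'}\cap\mu\,\ol{\CD}\ne\emptyset\}$ is finite. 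This is where I expect to spend the real effort: although both tiles are non-compact, the Dirichlet tessellation is locally finite and, in each cusp, the two fundamental domains share the same intrinsic cross-section (the translation lengths of the parabolic stabiliser), so only boundedly many translates of $\ol{\CD}$ can meet $\ol{\CD'}$. Making this precise through reduction theory, and thereby pinning down $\#F_1<\infty$, is the crux; once it is in hand the two displayed inequalities close the estimate and, by symmetry in $u$ and $u'$, yield equivalence of the metrics.
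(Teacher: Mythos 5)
Your verification of the metric axioms (equivariance via the substitution $\ga\mapsto\ga\delta$, positivity from $\sum_\ga u(\ga x)=1$, smoothness from local finiteness) is correct; the paper omits this part entirely. Your derivation of the equivalence from the finiteness of $F_1$ is also correct, and it is in essence the paper's argument in different bookkeeping: the paper opens by asserting, without proof, that there are finitely many $\ga_1,\dots,\ga_n\in\Ga$ and $c_j>0$ with $u'(x)\le\sum_{j=1}^nc_ju(\ga_jx)$ for all $x$, then re-indexes the sum $\sum_\ga u(\ga_j\ga x)\norm{\om(\ga)v}^2$ and uses that $\norm{\om(\ga_j)^{-1}\cdot}$ is comparable to $\norm\cdot$ on the finite-dimensional $V_\om$. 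That domination is equivalent to your claim $\#F_1<\infty$: given $\#F_1<\infty$ one may take $c_j=1$ and the $\ga_j$ to be the inverses of the elements of $F_1$, since $\sum_{\eta\in F_1}u(\eta^{-1}x)=1$ for $x\in\supp u'$. So the one step you leave open is precisely the step the paper does not prove either; as a proof your proposal is incomplete, but it isolates the actual content of the lemma correctly, and your pigeonhole route from that point ($u(\ga_0x)\ge1/\#F_0$, then $\sp{v,v}_{u',x}\le M\,\#F_0\,\sp{v,v}_{u,x}$) is sound.

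Two remarks on closing the gap. First, your instinct that this is where ``the geometry of the non-compact quotient enters'' is right in a strong sense: $\#F_1<\infty$ does \emph{not} follow from properties (a)--(c) of Lemma \ref{lemma1.5} alone. For $G=\SL_2(\R)$ one can tilt the fundamental strip in a cusp, replacing $\{0\le x\le c\}$ by $\{g(y)\le x\le g(y)+c\}$ with $g$ smooth and unbounded; the resulting $u'$ still satisfies (a)--(c) (precise invariance of horoballs gives (a)), yet its support meets infinitely many $\Ga_N$-translates of $\supp u$, and at a point high in the tilted strip the only nonvanishing terms of $\sum_\ga u(\ga x)$ have $\ga=p^n$ with $n$ comparable to $g(y)$, so $\norm{v}^2_{u,x}\asymp\norm{\om(p)^{n}v}^2$ while $\norm{v}^2_{u',x}\asymp\norm{v}^2$; if $\om(p)$ has an eigenvalue off the unit circle the metrics are genuinely inequivalent. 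So the lemma must be read as comparing two functions produced by the \emph{construction} of Lemma \ref{lemma1.5}, exactly as you do. Second, the finiteness you need is then available from standard facts rather than new effort: the closure of a Dirichlet domain is contained in a finite union of Siegel sets (reduction theory, as invoked in Definition \ref{def2.1}), and a Siegel set meets only finitely many $\Ga$-translates of another Siegel set (Siegel's finiteness property); in rank one one can argue directly, since the cusp ends of $\ol\CD$ and $\ol{\CD'}$ are bounded by geodesics running into the cusp and hence are straight strips of bounded width. Plugging either statement into your reduction $\{\mu\in\Ga:\ol{\CD'}\cap\mu\ol\CD\ne\emptyset\}$ finite completes the proof.
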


\begin{proof}
There are $\ga_1,\dots,\ga_n\in\Ga$ and $c_1,\dots,c_n>0$ such that
$$
u'(x)\le\sum_{j=1}^nc_ju(\ga_j x)
$$
holds for every $x\in G$.
Then
\begin{align*}
\norm{v}_{u',x}^2
&=\sum_{\ga\in\Ga}u'(\ga x)\ \norm{\om(\ga)v}^2\\
&\le \sum_{j=1}^nc_j\sum_{\ga\in\Ga}u(\ga_j\ga x)\ \norm{\om(\ga)v}^2\\
&\le \sum_{j=1}^nc_j\sum_{\ga\in\Ga}u(\ga x)\ \norm{\om(\ga_j)^{-1}\om(\ga)v}^2\\
\end{align*}
The norm $\norm v_j=\norm{\om(\ga_j)^{-1}v}$ on the finite dimensional space $V_\om$ is less than a constant times the original norm. The claim follows. 
\end{proof}

\begin{definition}
We define the space $L^2(\Ga\bs G,\om)$ as the Hilbert space of all measurable sections $\phi$, for which
$$
\int_{\Ga\bs G}\norm{\phi(x)}_x^2\ dx\ <\ \infty
$$
modulo null functions.
\end{definition}

\begin{lemma}
The space $L^2(\Ga\bs G,\om)$ equals the space of all measurable $\phi:G\to V_\om$ with $\phi(\ga x)=\om(\ga)\phi(x)$ and $\int_Gu(x)\norm{\phi(x)}^2\ dx<\infty$ modulo null functions, where the norm under the integral is the norm on $V_\om$.
\end{lemma}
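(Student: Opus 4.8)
The plan is to reduce the defining integral over $\Ga\bs G$ to an integral over $G$ by a standard unfolding, after which the two finiteness conditions become literally identical. First I would recall that a measurable section of $E_\om$ is precisely the datum of a measurable $\om$-automorphic function $\phi:G\to V_\om$, and that the metric of the previous definition gives
$$
\norm{\phi(x)}_{x}^2=\sum_{\ga\in\Ga}u(\ga x)\,\norm{\om(\ga)\phi(x)}^2,
$$
where $\norm{\cdot}$ is the norm attached to the fixed inner product $[.,.]$ on $V_\om$. By part (a) of Lemma \ref{lemma1.5} this sum is locally finite, so the right-hand side is a well-defined nonnegative measurable function of $x$, and by the compatibility $\sp{\om(\ga)v,\om(\ga)w}_{u,\ga x}=\sp{v,w}_{u,x}$ it is $\Ga$-invariant, hence descends to $\Ga\bs G$.

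Next, using the automorphy relation $\om(\ga)\phi(x)=\phi(\ga x)$, I would rewrite $\norm{\om(\ga)\phi(x)}^2=\norm{\phi(\ga x)}^2$, so that with $g(y):=u(y)\,\norm{\phi(y)}^2\ge 0$ one has $\norm{\phi(x)}_{x}^2=\sum_{\ga\in\Ga}g(\ga x)$. Since $g$ is nonnegative and measurable, Tonelli's theorem legitimises the interchange of summation and integration, and the Weil integration formula for the (suitably normalised) quotient measure on $\Ga\bs G$ yields
$$
\int_{\Ga\bs G}\norm{\phi(x)}_{x}^2\,dx=\int_{\Ga\bs G}\sum_{\ga\in\Ga}g(\ga x)\,dx=\int_G g(y)\,dy=\int_G u(y)\,\norm{\phi(y)}^2\,dy,
$$
as an identity in $[0,\infty]$. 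In particular one side is finite exactly when the other is, which identifies the two spaces of functions.

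Finally I would verify that the passage to classes modulo null functions matches on both sides. Here part (b) of Lemma \ref{lemma1.5} enters: since $\sum_{\ga}u(\ga x)=1$ for every $x$, for almost every $x$ at least one term $u(\ga x)$ is positive, and by automorphy $\phi(x)=0$ if and only if $\phi(\ga x)=0$; hence $\phi$ vanishes almost everywhere with respect to $u\,dx$ precisely when it vanishes almost everywhere on $G$, which is in turn the condition for the associated section to be null on $\Ga\bs G$. Thus the null classes agree and the two Hilbert spaces coincide.

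I do not expect a genuine obstacle here; the only points requiring care are measure-theoretic bookkeeping: the local finiteness from (a) that makes the metric well defined pointwise, the Tonelli interchange (legitimate because the integrand is nonnegative, so no integrability need be assumed in advance), and fixing the normalisations of Haar measure on $G$ and of the quotient measure on $\Ga\bs G$ so that the unfolding holds as an exact equality rather than up to a constant.
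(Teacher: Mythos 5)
Your proof is correct and follows essentially the same route as the paper: the paper's (much terser) two-line computation likewise expands $\norm{\phi(x)}_x^2=\sum_{\ga\in\Ga}u(\ga x)\norm{\phi(\ga x)}^2$ using the definition of the metric together with the automorphy relation, and then unfolds the sum over $\Ga$ against the integral over $\Ga\bs G$ into a single integral over $G$. The additional points you spell out (Tonelli for the nonnegative integrand, the matching of null functions via $\sum_\ga u(\ga x)=1$) are left implicit in the paper but are exactly the justifications its computation relies on.
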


\begin{proof}
Every section of $E_\om$ is given by a map $\phi:G\to V_\om$ with the property $\phi(\ga x)=\om(\ga)\phi(x)$ for all $\ga\in\Ga,\ x\in G$.
We compute
\begin{align*}
\int_{\Ga\bs G}\norm{\phi(x)}_x^2\,dx
&=\int_{\Ga\bs G}\sum_{\ga\in\Ga}u(\ga x)\ \norm{\phi(\ga x)}^2\,dx\\
&=\int_Gu(x)\ \norm{\phi(x)}^2\ dx.\mqed
\end{align*}
\end{proof}

\subsection*{The case of $G$-invariance}
\begin{remark}\label{rem1.9}
Here we consider the special case when the connection can be chosen $G$-invariant. 
This means that the representation $\om$ extends to $G$.
If the split rank of $G$ is $>1$ and $\Ga$ is arithmetic irreducible, this is automatic by super-rigidity \cite{Margulis,Rag}.
In this case, there is a bijection $\eta$ from the set $\CM(\Ga\bs G,\om)$ of measurable, $\om$-automorphic functions to the set $\CM(\Ga\bs G,V_\om)\cong\CM(\Ga\bs G)\otimes V_\om$ of measurable functions $\Ga\bs G\to V_\om$ given by
$$
\eta(\phi)(x)=\om(x^{-1})\phi(x).
$$
This map is $G$-equivariant in the sense that it intertwines the right translation representation $R$ with $R\otimes\om$, i.e., one has  
$$
\eta R\eta^{-1}=R\otimes\om.
$$
Now $\phi$ is in $L^2(\Ga\bs G,\om)$ if 
$$
\int_Gu(x)\norm{\phi(x)}_{V_\om}^2\ dx\ <\ \infty.
$$
On the other hand, $\eta(\phi)$ lies in $L^2(\Ga\bs G,V_\om)$, if
\begin{align*}
\int_Gu(x)\norm{\om(x^{-1})\phi(x)}^2_{V_\om}\,dx<\infty.
\end{align*}
This implies that $\eta$ does not respect $L^2$-spaces and so the respective theories of automorphic forms are unrelated, i.e., the theory of $\om$-automorphic forms is independent of the classical case, even if the  rank of $G$ is $>1$.
\end{remark}

\subsection*{Tameness}

\begin{definition}
A minimal parabolic $\CP=MAN$  is called \e{cuspidal}, if 
$\Ga_N=\Ga\cap N$ is cocompact in $N$.
Note that such a parabolic exists, as we are assuming that the lattice $\Ga$ is not cocompact.
Further, there are only finitely many $\Ga$-conjugacy classes $\CP_1,\dots,\CP_r$ of cuspidal parabolics, \cites{Borel,Borel-Ji}.
(Note that in the given sources the statement is only shown for arithmetic groups. In higher rank every lattice is arithmetic and in rank one the claim is easy.)
We fix representatives $\CP_j$ of the $\Ga$-conjugacy classes of cuspidal parabolics and call them (or their classes) the \e{cusps} of $\Ga$.
\end{definition}

\begin{definition}
Let $P=MAN$ be a cuspidal parabolic. By unipotent super rigidity, the representation $\om$ extends from $\Ga_N=\Ga\cap N$ to $N$.
We fix such an extension for every $P$ and choose it invariant under $\Ga$-conjugacy.

The representation $(\om,V_\om)$ of $\Ga$ is called \e{tame}, if for every parabolic $P$ with Langlands decomposition $P=MAN$ all eigenvalues of $\om(n)$, $n\in N$, have absolute value 1.
In \cite{FPZeta}, this property is called \e{non-expanding cusp monodromy}.
\end{definition}

\begin{proposition}
If $\om$ extends to $G$, then $\om$ is tame.
\end{proposition}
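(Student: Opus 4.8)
The plan is to exploit the hypothesis in order to pass to the Lie algebra. Since $\om$ extends to $G$, we may regard $\om$ as a continuous---hence real-analytic---finite-dimensional representation $\om\colon G\to\GL(V_\om)$ and differentiate it to obtain a representation $d\om\colon\Cg\to\End(V_\om)$ of the Lie algebra $\Cg=\Lie(G)$. The essential point is then that a finite-dimensional representation of a semisimple Lie algebra sends nilpotent elements to nilpotent operators; applying this to the Lie algebra of the unipotent radical and exponentiating will force all eigenvalues of $\om(n)$ to equal $1$.

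In detail, fix a parabolic $P=MAN$ with unipotent radical $N$. As $G$ is semisimple and $N$ is the unipotent radical of a parabolic, $N$ is simply connected and $\exp\colon\Cn\to N$ is a diffeomorphism, where $\Cn=\Lie(N)$; thus every $n\in N$ can be written as $n=\exp(Y)$ for a unique $Y\in\Cn$. Moreover each $Y\in\Cn$ is $\ad$-nilpotent in $\Cg$, and since $\Cg$ is semisimple, $\ad$-nilpotency agrees with abstract nilpotency in the Jordan decomposition. First I would invoke the preservation of the Jordan decomposition under finite-dimensional representations of semisimple Lie algebras to conclude that $d\om(Y)$ is a nilpotent operator on $V_\om$. (Alternatively, for $Y\neq 0$ one embeds $Y$ into an $\mathfrak{sl}_2$-triple by Jacobson--Morozov and reads off nilpotency of $d\om(Y)$ from the representation theory of $\mathfrak{sl}_2$.) It then remains to exponentiate: for $n=\exp(Y)\in N$ one has $\om(n)=\exp\big(d\om(Y)\big)$, and since $d\om(Y)$ is nilpotent, $\om(n)$ is unipotent. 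Hence all eigenvalues of $\om(n)$ equal $1$, in particular have absolute value $1$, so $\om$ is tame.

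The step I expect to be the crux is the passage from $\ad$-nilpotency of elements of $\Cn$ to nilpotency of their images under the arbitrary finite-dimensional representation $d\om$; this is exactly where semisimplicity of $\Cg$, guaranteed by the standing hypotheses on $G$, is indispensable, since the preservation of the Jordan decomposition fails for general Lie algebras. The remaining ingredients---smoothness of a continuous finite-dimensional representation, the exponential diffeomorphism onto $N$, and compatibility of $\exp$ with $d\om$---are routine.
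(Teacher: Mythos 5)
Your proof is correct and takes essentially the same route as the paper: differentiate $\om$ to a representation of $\Cg$, argue that the image of $\Cn=\Lie(N)$ consists of nilpotent operators, and exponentiate to conclude that $\om(N)$ is unipotent, hence has all eigenvalues equal to $1$. If anything, you are more careful than the paper, which simply asserts that any representation of $\Cg$ sends $\Cn$ to a nilpotent subalgebra of $\End(V_\om)$, whereas you correctly identify that this rests on semisimplicity of $\Cg$ via preservation of the Jordan decomposition (or Jacobson--Morozov), not merely on nilpotency of $\Cn$ as an abstract Lie algebra.
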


\begin{proof}
The representation $\om $ induces a representation of the Lie algebra $\Cg$. All such representations map the nilpotent algebra $\Cn=\Lie(N)$ to a nilpotent subalgebra of $\End(V_\om)$.
Hence $\om(N)=\om(\exp(\Cn))=\exp(\om(\Cn))$ is unipotent.
\end{proof}

\begin{definition}
The representation $\om$ is called \e{unitary in the cusps}, if $\om(\ga)$ is unitary for every $\ga\in\Ga_N$ and every cuspidal $P=MAN$.
In that case, $\om$ is tame.
Even more is true, by the Lie-Kolchin Theorem, the representation $\om|_N$ diagonalises, i.e., it factors through the abelian quotient $N^\mathrm{ab}$.
\end{definition}

\section{Cusp forms}

\begin{definition}
For a minimal parabolic $\CP=MAN$ and  $T>0$ let $A_T$ denote the set of all $a\in A$ with $a^\al\ge T$ for every $\CP$-positive root of $A$.
\end{definition}

\begin{definition}\label{def2.1}
Let $\CP_1,\dots,\CP_r$ be the cusps of $\Ga$ and for each $1\le j\le r$ let $\CP_j=M_jA_jN_j$ be a Langlands decomposition.
We  write $\Ga_j$ for $\Ga_{N_j}=\Ga\cap N_j$.

Reduction theory \cites{Borel,Borel-Ji} tells us that there is $T>0$ and a compact set $\Om_T\subset G$, such that the set
\begin{align*}
\CS=\Om_T\cup\bigcup_{j=1}^r\underbrace{\CR_{\ j}\ A_{j,T}\ K}_{\CS_j}, 
\tag*{$(*)$}
\end{align*}
Contains the fundamental domain $\CF$, where $\CR_{\ j}$ is a relatively compact set of representatives of $\Ga\cap N_j\bs N_j$.
Note that \cites{Borel,Borel-Ji} requires the group $\Ga$ to be arithmetic.
If the splitrank of $G$ is $\ge 2$ and $\Ga$ irreducible, this condition is automatic \cite{Margulis}. If the splitrank of $G$ is one, the assertion is easy to show.
\end{definition}

\begin{definition}
For a cusp $\CP_j$ we write $\Ga_j$ for $\Ga\cap N_j$.
Let $\Pr_j:V_\om\to V_\om$ be the orthogonal projection onto the space $V_\om^{\Ga_j}$ of $\Ga_j$-invariants in the space $V_\om$.
\end{definition}

\begin{definition}
By super rigidity for unipotent groups, (Cor. 6.8 of \cite{Witte}), 
the representation $\om$ extends  from $\Ga_j'$ to $N_j$ for some finite-index subgroup $\Ga_j'$.
We fix such an extension.
A measurable function $\phi:G\to V_\om$ with $\phi(\ga x)=\om(\ga)\phi(x)$ for all $\ga\in\Ga$, $x\in G$, is called a \e{strong cusp form}, or simply a \e{cusp form}, if 
$$
\int_{\Ga_j'\bs N_j}\om(n^{-1})\(\phi(ny)\)\ dn=0
$$  
holds for all $y\in G$ and  $j=1,\dots, r$.
We write $L^2_\mathrm{cusp}(\Ga\bs G,\om)$ for the space of strong cusp forms in $L^2(\Ga\bs G,\om)$.
\end{definition}

\begin{definition}
Let $\Pr_j:V_\om\to V_\om$ denote the orthogonal projection onto the subspace $V_\om^{\Ga_j}$ of $\Ga_j$ invariants.
When dealing with a single cusp $\CP=MAN$, we write $\Pr_N$ instead of $\Pr_j$.
A measurable function $\phi:G\to V_\om$ with $\phi(\ga x)=\om(\ga)\phi(x)$ for all $\ga\in\Ga$, $x\in G$, is called a \e{weak cusp form}, if 
$$
\int_{\Ga_j\bs N_j}\Pr_j\(\phi(ny)\)\ dn=0
$$  
holds for all $y\in G$ and  $j=1,\dots, r$.
We write $L^2_\weak(\Ga\bs G,\om)$ for the space of weak cusp forms in $L^2(\Ga\bs G,\om)$.
\end{definition}

Note that, if $\om$ is unitary in the cusps, then every strong cusp form is also a weak cusp form.

\begin{remark}\label{remark2.6}
In the case that $\om$ extends to $G$, we relate the definition of strong cusp forms to the cusp forms in $L^2(\Ga\bs G, V_\om)$ via the map $\eta$ of Remark \ref{rem1.9}.
Now $\phi$ is a cusp form if for every $x\in G$ one has
$$
\int_{\Ga_j\bs N_j}\om(n^{-1})\(\phi(nx)\)\ dn=0
$$ 
and $f=\eta(\phi)$ satisfies the cusp condition $\int_{\Ga_j\bs N_j}f(nx)\,dn=0$ if
$$
\om(x^{-1})\int_{\Ga_j\bs N_j}\om(n^{-1})\(\phi(nx)\)\ dn=0
$$
and these two conditions coincide.
So $\eta$ preserves the cusp condition, although not the $L^2$-condition.
\end{remark}

\begin{remark}
In \cite{Venkov} and \cite{DeiMonEis}, what we call weak cusp form is called a cusp form. In both cases it is imposed that $\om$ be unitary in the cusps. If one wants to include the case of tame $\om$, then the notion of strong cusp forms is better suited.
This in particular justified by the nice comparison to untwisted cusp forms above.
\end{remark}

\begin{definition}
A function $\phi:\CF\to V_\om$ is called \e{rapidly decreasing}, if
for every $T>0$ one has
$$
\norm{\phi(x)}\ll \[\exp\[\norm{\log\ul a_j(x)}\]\]^{-T},
$$
where $\al$ runs through the $\CP_j$-positive roots on $A_j$ and $\ul a_j(x)$ is the $A_j$-part of $x$, when $x\in \CR_{\ j}\ A_j\ K$.

Note that if $\phi\in\CM(\Ga\bs G,\om)$, then the notion of $\phi$ being rapidly decreasing is independent of the choice of the fundamental domain $\CF$.
\end{definition}

\begin{theorem}\label{prop2.8}
Assume that $\om$ is tame.
Let $f\in C_c^\infty(G)$.
\begin{enumerate}[\rm(a)]
\item There exists $C>0$ such that for every $\phi\in L^2_\strong(\Ga\bs G,\om)$ we have
$$
\sup_{x\in\CF}\norm{R(f)\phi(x)}\le C\ \norm \phi_2.
$$
The function $R(f)\phi$ is rapidly decreasing on $\CF$.

\item The operator $R(f)$ is compact on $L^2_\cusp(\Ga\bs G,\om)$.
\item
The representation $R$ on $H=L^2_\cusp(\Ga\bs G,\om)$ is a direct sum of the subrepresentations $H_\la$, $\la\in\C^R$, i.e., the space
$
\ds\bigoplus_{\la\in\C^R}H_\la
$
is dense in $H$.
Only countably many of the $H_\la$ are non-zero and each $H_\la$ has a finite filtration with irreducible quotients.
\end{enumerate}
If $\om$ is unitary in the cusps, then all of these conclusions also hold for $L^2_\weak(\Ga\bs G,\om)$ instead of $L^2_\cusp(\Ga\bs G,\om)$.
\end{theorem}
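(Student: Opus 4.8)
The plan is to obtain (c) as an immediate application of the Spectral Theorem of Section~\ref{Sec1}, so that the real work lies in the analytic estimate (a), from which the compactness in (b) then follows by a soft equicontinuity argument. Throughout I realise $R(f)$ through its automorphic kernel: folding the defining integral over $\Ga\bs G$ and using $\phi(\ga y)=\om(\ga)\phi(y)$ gives
$$
R(f)\phi(x)=\int_G f(x^{-1}y)\phi(y)\,dy=\int_{\Ga\bs G}K_f(x,y)\,\phi(y)\,dy,\qquad K_f(x,y)=\sum_{\ga\in\Ga}f(x^{-1}\ga y)\,\om(\ga).
$$

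For (a) I would first record two reductions. Since the twisted constant-term map along each cusp is an integral in the \emph{left} $N_j$-variable while $R(f)$ acts by right translation, the two operations commute; hence $R(f)\phi$ is again a smooth strong cusp form whenever $\phi$ is, and in particular $L^2_\cusp(\Ga\bs G,\om)$ is $R(f)$-invariant. On the compact piece $\Om_T$ of the set $\CS$ from Definition~\ref{def2.1}, a plain Cauchy--Schwarz bound $\norm{R(f)\phi(x)}\le\norm f_2\big(\int_{x\supp f}\norm{\phi}^2\big)^{1/2}$, together with the bounded-overlap property of the Siegel sets, already yields the sup-bound $\le C\norm\phi_2$. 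The substance of (a) is therefore the rapid decay on each cuspidal region $\CS_j=\CR_j A_{j,T}K$, which I would prove directly from the kernel. When the $A_j$-part of $x$ is deep in the cusp, the separation property of the Siegel sets forces the surviving terms of $K_f(x,y)$ to lie in $\Ga\cap P_j$, so that, after unfolding into the $N_j$-direction, the leading ($N_j$-invariant) contribution is precisely the twisted constant term of $\phi$, which vanishes by cuspidality. What remains is the pairing of $\phi$ against the non-trivial $N_j$-frequencies of the kernel, and here tameness is decisive: because every eigenvalue of $\om(n)$ has absolute value $1$, the twist contributes only polynomial factors in $\norm{\log\ul a_j}$, so the non-constant frequencies are bounded by $(\exp\norm{\log\ul a_j(x)})^{-T}$ for every $T$, with constant linear in $\norm\phi_2$. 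This is the twisted, effective form of the classical statement that a cusp form of moderate growth is rapidly decreasing, and I expect it to be the main obstacle: for non-unitary $\om$ the modulus-one spectral condition is exactly what prevents the unipotent twisting from reintroducing exponential growth and thereby destroying the decay.

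Granting (a), part (b) is standard. Applying (a) to $f$ and to its derivatives $Xf$ with $X\in U(\Cg)$ shows that $\{R(f)\phi:\norm\phi_2\le 1\}$ is uniformly bounded and equicontinuous on compact subsets of $\CF$, so the Arzel\`a--Ascoli theorem gives precompactness in the sup-norm on compacta; the rapid decay of (a), being uniform over the unit ball, controls the $L^2$-tails in the cusps, and the two facts combine to show that $R(f)$ maps the unit ball to a precompact subset of $H$, i.e.\ $R(f)$ is compact.

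Finally, $L^2_\cusp(\Ga\bs G,\om)$ is a closed $G$-invariant subspace of $L^2(\Ga\bs G,\om)$, and since the weight $u$ of Lemma~\ref{lemma1.5} is right $K$-invariant the representation is $K$-unitary; by (b) it is thus a $K$-unitary compact Hilbert representation, so Theorem~\ref{nextprop} applies and yields (c). For the last assertion, $\om$ unitary in the cusps is in particular tame, and the entire argument carries over to $L^2_\weak(\Ga\bs G,\om)$ once the twisted constant term is replaced by its $\Pr_j$-projection onto $V_\om^{\Ga_j}$: unitarity in the cusps makes this projection orthogonal and compatible with the $N_j$-Fourier decomposition, so the complementary frequencies again decay rapidly and (a)--(c) hold verbatim for weak cusp forms.
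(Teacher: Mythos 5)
Your proposal is correct and follows essentially the same route as the paper: realise $R(f)$ by a kernel, subtract its $N_j$-invariant (constant-term) part, which annihilates cusp forms, control the remaining non-trivial $N_j$-Fourier modes by Schwartz decay against the growth that tameness permits for the unipotent twist, then get (b) by Arzel\`a--Ascoli and (c) from Theorem~\ref{nextprop}, with the weak case handled by projecting with $\Pr_j$ and using Lie--Kolchin diagonalisation of the unitary $\om|_{N_j}$. The only differences are cosmetic --- the paper folds the kernel over $\Ga_{N_j}$ alone and applies Poisson summation in Malcev coordinates instead of invoking Siegel-set separation for the full $\Ga$-kernel --- except for one imprecision: tameness makes the twist grow polynomially in $\norm{\al_x}\asymp\exp\(b\norm{\log\ul a_j(x)}\)$, not polynomially in $\norm{\log\ul a_j(x)}$ as you state, but since the Schwartz decay at the frequencies $\al_x(\nu)$, which grow at the same exponential rate, beats any fixed such polynomial, your conclusion stands.
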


\begin{corollary}
If $\om$ is unitary in the cusps and $V_\om^{\Ga_j}=0$ for each $j$, then $L^2(\Ga\bs G,\om)$ has no continuous spectrum.
This case can easily be established for $G=\SL_2(\R)$, since torsion-free, non-cocompact lattices are free groups in this case.
\end{corollary}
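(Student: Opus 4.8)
The plan is to recognise that the corollary is almost immediate from Theorem~\ref{prop2.8}: the hypothesis $V_\om^{\Ga_j}=0$ forces the weak cusp condition to be vacuous, so that $L^2_\weak(\Ga\bs G,\om)$ is the whole space, whereupon the discrete decomposition already established for weak cusp forms yields the absence of continuous spectrum.

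First I would note that for each cusp $j$ the orthogonal projection $\Pr_j\colon V_\om\to V_\om^{\Ga_j}$ onto the $\Ga_j$-invariants is the zero map, since its image $V_\om^{\Ga_j}$ vanishes by hypothesis. Hence the defining identity of a weak cusp form,
$$
\int_{\Ga_j\bs N_j}\Pr_j\(\phi(ny)\)\ dn=0,
$$
holds trivially for every measurable $\om$-automorphic $\phi$ and every $y\in G$. Consequently $L^2_\weak(\Ga\bs G,\om)=L^2(\Ga\bs G,\om)$, both as $G$-representations and as Hilbert spaces. Since $\om$ is unitary in the cusps, the closing clause of Theorem~\ref{prop2.8} applies verbatim with $L^2_\weak(\Ga\bs G,\om)$ in the role of $L^2_\cusp(\Ga\bs G,\om)$; in particular part~(c) gives that the representation $R$ on $H=L^2_\weak(\Ga\bs G,\om)$ is the closure of $\bigoplus_{\la\in\C^R}H_\la$, where only countably many $H_\la$ are nonzero and each is a subrepresentation of finite length. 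Feeding in the identification $L^2_\weak=L^2$, the entire space $L^2(\Ga\bs G,\om)$ becomes a dense direct sum of the finite-length pieces $H_\la$, which is precisely the statement that it carries no continuous spectrum.

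For the final assertion, let $G=\SL_2(\R)$ and let $\Ga$ be a torsion-free non-cocompact lattice; then $\Ga$ is free, being the fundamental group of a non-compact finite-area hyperbolic surface. Each cusp subgroup $\Ga_j=\Ga\cap N_j$ is infinite cyclic, generated by a parabolic element. Using freeness one may prescribe $\om$ on a generating set so that every cusp generator is sent to a fixed-point-free unitary operator, for instance a rotation through an irrational angle, or, when a cusp generator is forced to be a commutator word, a generic product of commutators of unitaries. Such a choice makes $\om$ unitary in the cusps with $V_\om^{\Ga_j}=0$ for every $j$, so the hypotheses are met and the corollary has content.

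The argument is essentially formal, the corollary being carried entirely by Theorem~\ref{prop2.8} once one observes that $\Pr_j=0$. The only point meriting any care is the realisation step for $\SL_2(\R)$: one must check that the cusp monodromies can be arranged simultaneously to be unitary without eigenvalue $1$, and this is exactly where the freeness of $\Ga$ is invoked. I do not anticipate a genuine obstacle, as the flexibility of representations of a free group makes this routine.
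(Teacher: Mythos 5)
Your proposal is correct and takes essentially the same route the paper intends: the paper gives no separate proof of this corollary (the proof environment following it belongs to Theorem \ref{prop2.8}), and the deduction it presupposes is exactly yours --- $V_\om^{\Ga_j}=0$ forces $\Pr_j=0$, so every element of $L^2(\Ga\bs G,\om)$ is a weak cusp form, $L^2_\weak(\Ga\bs G,\om)=L^2(\Ga\bs G,\om)$, and part (c) (valid for weak cusp forms since $\om$ is unitary in the cusps) yields the discrete decomposition, as the introduction also states. Your realisation step for $G=\SL_2(\R)$ likewise matches the paper's later remark that, $\Ga$ being free when $r\ge1$, one may prescribe $\om(\ga_j)$ and $\om(p_j)$ arbitrarily, in particular with unitary, fixed-point-free cusp monodromies.
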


\begin{proof}
We shall give the proof for the case of strong cusp forms and remark the necessary changes or the weak case at the respective places.

(a)
If suffices to assume that $\phi$ is supported in $\CS_j$ for one fixed $j$. For brevity we write $N=N_j$ and $\Pr$ for $\Pr_j$.
For $\phi\in L^2_\cusp(\Ga\bs G,\om)$ we define the kernel $K(x,y)$ in the following computation:
\begin{align*}
R(f)\phi(x)&=\int_Gf(y)\phi(xy)\ dy=\int_Gf(x^{-1}y)\phi(y)\ dy\\
&=\int_{\Ga_{N}\bs G}\underbrace{\sum_{\ga\in\Ga_{N}}f(x^{-1}\ga y)\om(\ga)}_{K(x,y)}\ \phi(y)\ dy.
\end{align*}

Set $H(x,y)=\int_{N}f(x^{-1}ny)\ \om(n)\ dn$, where we choose the Haar measure on $N$ such that $\vol(\Ga_N\bs N)=1$.
Then
\begin{align*}
\int_{\Ga_N\bs G}H(x,y)\ \phi(y)\ dy
&=\int_{\Ga_N\bs G}\int_{N}f(x^{-1}ny)\om(n)\ dn\ \phi(y)\ dy\\
&=\int_{\Ga_N\bs G}\int_{N/\Ga_N}\sum_{\ga\in\Ga_N} f(x^{-1} n\ga y)\om(n)\ dn\ \om(\ga)\phi(y)\ dy\\
&=\int_{N/\Ga_N}\int_{\Ga_N\bs G}\sum_{\ga\in\Ga_N} f(x^{-1} n\ga y)\om(n)\ \phi(\ga y)\ dy\ dn\\
&=\int_{N/\Ga_N}\int_{G} f(x^{-1}n y)\om(n)\ \phi(y)\ dy\ dn\\
&=\int_{N/\Ga_N}\int_{G} f(x^{-1} y)\om(n)\ \phi(n^{-1}y)\ dy\ dn\\
&=\int_{G} f(x^{-1} y)\underbrace{\int_{N/\Ga_N}\om(n^{-1})\ \phi(ny)\ dn}_{0}\ dy.\\
\end{align*}
The case of weak cusp forms, one replaces $\om(n)$ by the projection $\Pr$ in the definition of $H$, ending up with the same result.

Set $K'=K-H$, then the corresponding integral operators of $K$ and $K'$ agree on $L^2_\cusp$.
By the theory of Malcev bases, there exists a linear bijection  $\La:\R^d\to \Cn=\Lie(N)$, such that with $\ul n(t)=\exp(\La(t))$ we have $\ul n(\Z^d)=\Ga_N$ and the image of the Lebesgue measure on $\R^d$ is a Haar measure on $N$.
Note that we now have a second group structures on $N$ with the same Haar measure. 
We denote this abelian Lie group as $N'$.

Note that if $\om|_N$ is unitary, it factors through $N^\mathrm{ab}$ by the Lie-Kolchin Theorem and hence $\om|_N$ in this case can be viewed as a representation of $N'$ as well.
For $x,y\in G$ let $F_{x,y}(t)=f\(x^{-1}\exp(\ul n(t))y\)\om(\ul n(t))$. 
Then $F_{x,y}\in C_c^\infty(\R^d,V_\om)$ and  $H(x,y)=\what F_{x,y}(0)$,
where $\what F_{x,y}$ is the Fourier transform of $F_{x,y}$.
The Poisson summation formula  implies
\begin{align*}
K'(x,y) &= \sum_{k\in\Z^d\sm\{0\}} \what F_{x,y}(k)\\ 
&=\  \sum_{\nu\in\Z^d\sm\{0\}}\int_{\R^d} f\(x^{-1}\ul n(t)\ y\)\ \om(\ul n(t))\ e^{-2\pi i\sp{\nu,t}}\,dt.
\end{align*}
As $f$ has compact support, $K(x,y)=0=K'(x,y)$  if the points $xK$ and $yK$ have large distance in $\CF$. Also, for given $x,y$ the integrand above has compact support.

By decreasing $T$ we can assume that if $K'(x,y)\ne 0$m then there is $j$ such that $x,y$ both lie in $\CF_j=\Om_T\cup\CS_j$.
We can write 
$x=r_xa_xk_x$ and likewise for $y$. There is a compact set $\kappa\subset A$ such that $a_y\in \kappa a_x$ and a compact set $C\subset\R^d$, such that
$a_x^{-1}\ul n(t)a_x\in \ul n\(x+C\)$.
We have
\begin{align*}
x^{-1}\ul n(t)y
&=k_x^{-1}n_x^{-1}a_x^{-1}\ul n(t)a_yn_yk_y\\
&=k_x^{-1}\ul n\(\Ad(n_x^{-1}a_x^{-1})t\)n_x^{-1}a_x^{-1}a_yn_yk_y\\
&=k_x^{-1}\ \ul n(\al_x^{-1}(t))\ \be_{x,y},
\end{align*}
where $k_x\in K$ and $\be_{x,y}$ stays in a fixed compact set. The map $\al_x=\Ad(a_xn_x)$ is a linear bijection on the Lie algebra, depending  smoothly on $x$.
Note that $n_x$ stays in a fixed compact set and therefore the norm of any $\al_x(v)$ is ruled by the eigenvalues of $\Ad(a_x)$ which are bounded below for $x\in\CF_j$.

\begin{lemma}\label{lemm2.9}
As $x$ tends to infinity in $\CF_j$, we have
$$
\exp\[a\norm{\log\ul a_j(x)}\]\ll  \norm{\al_x}\ll\exp\[b\norm{\log\ul a_j(x)}\],
$$
for some $0<a<b$,
where $\ul a_j(x)$ is the $A$-part in the Iwasawa decomposition $x=a_jn_jk_j$ and $\al$ ranges over the $\CP_j$-positive roots.
\end{lemma}

\begin{proof}
Use the root space decomposition of $\Cn=\Lie(N)$.
\end{proof}

Changing variables to $\al_x^{-1}(t)$ we get
\begin{align*}
K'(x,y)&=\sum_{\nu\ne 0}\int_{\R^d} |\det(\al_x)|\ f\(k_x\ \ul n(t)\ \be_{x,y}\)\ \om\(\ul n(\al_x(t)\)\ e^{-2\pi i\sp{\nu,\al_x(t)}}\ dt\\
&=|\det(\al_x)|\sum_{\nu\ne 0}\int_{\R^d} \ f\(k_x\ \ul n(t)\ \be_{x,y}\)\ \om\(\ul n(\al_x(t))\)\ e^{-2\pi i\sp{\al_x(\nu)^*,t}}\ dt.
\end{align*}
The function 
$$
X\mapsto\int_\R f\(k_x\ \ul n(t)\ \be_{x,y}\)\ \om\(\ul n(\al_x(t))\)\ e^{-2\pi i\sp{X,t}}\ dt
$$ 
is the Fourier transform of a Schwartz function, hence a Schwartz function itself.
Since $\om$ is tame, the operator norm $\norm{\om\(\ul n(\al_x(t))\)}$ is bounded by a polynomial in $\norm t$ and $\norm{\al_x}$.
Hence it follows that for every $n\in\N$ there exists $C_n>0$ such that 
\begin{align*}
\norm{K'(x,y)}\ &\le\ C_n\ |\det(\al_x)|\norm{\al_x}^A\sum_{\nu\ne 0}\(1+\norm{\al_x(\nu)}\)^{-n}\\
&\le C_n'\ \norm{\al_x}^{A+1}\sum_{\nu\ne 0}\(1+\exp\[a\norm{\log\ul a_j(x)}\]\norm \nu\)^{-n}.
\end{align*}
This is the decisive estimate. For the case of  weak cusp forms, in the Fourier expansion of $K'$ one has to add a summand for $\nu=0$ which corresponds to the projection $(1-\Pr)$.
As in this case, $\om|_N$ can be viewed as a representation of $N'$,
the summand of $\nu=0$ is a sum of non-trivial characters in $t$, so the Schwartz function argument works as well.

Hence there is $E>0$ and for every $n$ there is a constant $D(n)>0$ such that for $\phi\in L^2_\cusp(\Ga\bs G,\om)$ we have
\begin{align*}
\norm{K'(x,y)}\le D(n)\(1+E\ \exp\[a\norm{\log\ul a_j(x)}\]^{-n}
\end{align*}
The Cauchy-Schwartz inequality implies
\begin{align*}
\norm{R(f)\phi(x)}
&\le D(n)\(1+E\ \exp\[a\norm{\log\ul a_j(x)}\]^{-n}\norm \phi_2
\end{align*}
This implies part (a).
To prove part (b), we  use the Arzela--Ascoli Theorem.
Let $f\in C_c^\infty(G)$.
We have
$$
R(f)L^2( \Ga\bs G,\om)\ \subset\ C^\infty(\Ga\bs G,\om).
$$
By part (a) the image of $\left\{\phi\in L_\cusp^2:\norm\phi_2=1\right\}$ is globally bounded and likewise for the image of 
$\tilde R_XR(f)=R(\tilde L_Xf)$ for every $X\in \mathrm{Lie}(G)$, where $\tilde R$ denotes the derivative along $X$.
By the Arzela--Ascoli Theorem every sequence
in $R(f)L_\cusp^2$ has a point-wise convergent subsequence and by part (a) this sequence is dominated by a constant.
Therefore the sequence converges in the 
$L^1$-norm and as it is bounded, it converges in the $L^2$-norm.
So the operator
$R(f)$ on $L_\cusp^2$ is indeed compact.
This proves part (b) of the theorem.
Finally, part (c) follows from the Theorem \ref{nextprop}.
\end{proof}

\section{Incomplete Eisenstein   series}

\begin{definition}
We fix a torsion-free lattice $\Ga$ of $G$ and a maximal compact subgroup $K$ of $G$.
If $T>0$ is large enough, then the space $\Ga\bs G$ has the following form
$$
\Ga\bs G=\Om\cup\bigcup_{j=1}^n S_j,
$$
where $\Om$ is compact and the $S_j$ are the \e{cusp sections}, each of the form $S_j\cong \Ga_{P_j}\bs N_jA_{j,T}K$, where $P_j=M_jA_jN_j$ are the cusps.
Enlarging $\Om$ if necessary, we can assume the cusp sections to be pairwise disjoint.
\end{definition}

\begin{definition}
Fix a cusp $ P_j= P=MAN$.
Let $W_P$ denote the space of all smooth functions $f:G\to\C$, for which there exists $\al\in C_c^\infty(A)$, such that 
$$
f(anx)=\al(a)\ f(x)
$$
holds for all $x\in G$ and all $an\in AN$.
\end{definition}

\begin{definition}
For a cusp $ P_j= P=MAN$ a function $f\in W_P$ and a vector $v\in V_\om^{\Ga_N}$ 
we define the   \e{incomplete Eisenstein  series} as
$$
E_{f,v}(x)=\sum_{\ga\in\Ga_N\bs \Ga}f(\ga x)\ \om(\ga^{-1})v.
$$
Note that, since $\al$ has compact support, the sum is locally finite.
Let 
$
C_{c,\Eis}^\infty(\om)
$
denote the vector space of all incomplete Eisenstein  series.
\end{definition}

\begin{theorem}
Let $L^2_{\Eis}(\om)$ denote the $L^2$-closure of the space of incomplete Eisenstein  series. 
We have  $G$-stable decompositions
$$
C_c^\infty(\om)\ =\ C_{c,\Eis}^\infty(\om)\ \oplus\ C_{c,\cusp}^\infty(\om)
$$
and
$$
L^2(\om)\ =\ L^2_{\Eis}(\om)\ \oplus\ L^2_\cusp(\om).
$$
\end{theorem}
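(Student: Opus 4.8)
The plan is to realise $L^2_\cusp(\om)$ as the orthogonal complement of the incomplete Eisenstein series, and to deduce the algebraic splitting of $C_c^\infty(\om)$ from the same adjunction together with a construction of Eisenstein series having prescribed constant terms. I would treat one cusp $P=MAN$ at a time and sum over the finitely many cusps at the end. Throughout, $\sp{\cdot,\cdot}_x$ denotes the $\Ga$-invariant metric on $E_\om$ and $[\cdot,\cdot]$ the fixed inner product on $V_\om$, so that the inner product on $L^2(\om)$ is $\int_{\Ga\bs G}\sp{\cdot,\cdot}_x\,dx$.

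First I would compute the pairing of an incomplete Eisenstein series against an arbitrary $\phi\in L^2(\om)$ by unfolding. For $f\in W_P$ and $v\in V_\om^{\Ga_N}$, the relations $\om(\ga)v=v$ for $\ga\in\Ga_N$, the left $N$-invariance of $f$, and the $\Ga$-invariance of the metric together show that the $\ga$-summand equals $h(\ga x)$, where $h(x)=f(x)\,\sp{v,\phi(x)}_x$ is left $\Ga_N$-invariant; unfolding $\Ga\bs G$ to $\Ga_N\bs G$ then gives
\begin{align*}
\sp{E_{f,v},\phi}&=\int_{\Ga\bs G}\sum_{\ga\in\Ga_N\bs\Ga}f(\ga x)\,\sp{\om(\ga^{-1})v,\phi(x)}_x\,dx\\
&=\int_{\Ga_N\bs G}f(x)\,\sp{v,\phi(x)}_x\,dx.
\end{align*}
The essential point is that the twist $\om(\ga^{-1})v$ built into $E_{f,v}$ cancels the $\om$-automorphy of $\phi$ against the metric. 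Writing $x=ny$ with $n\in\Ga_N\bs N$ and $y$ in a section of $N\bs G$, and using $f(ny)=f(y)$, this becomes
$$
\sp{E_{f,v},\phi}=\int_{N\bs G}f(y)\[\int_{\Ga_N\bs N}\sp{v,\phi(ny)}_{ny}\,dn\]dy.
$$

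The hard part will be to identify the inner fibre integral with the twisted constant term defining cusp forms, and here the non-unitarity of $\om$ is the real obstacle: the metric $\sp{\cdot,\cdot}_{ny}$ is neither flat nor $\om(N)$-invariant, so the integrand does not reduce to a fixed pairing on the nose. I would resolve this by choosing, within the metric equivalence class, a representative that is $N$-invariant on each cusp neighbourhood. When $\om$ is unitary in the cusps this is available, since $\om|_N$ then factors through $N^{\mathrm{ab}}$ and is unitary on the cocompact lattice $\Ga_N$, hence unitary; with such a metric $\sp{v,\phi(ny)}_{ny}=[v,\phi(ny)]$ and the fibre integral equals $\big[v,\int_{\Ga_N\bs N}\phi(ny)\,dn\big]$. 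As $v$ ranges over $V_\om^{\Ga_N}$, the vanishing of this for all $v$ is precisely the cusp condition $\int_{\Ga_N\bs N}\Pr_N\phi(ny)\,dn=0$ defining the cusp space in the statement. Summing over the cusps gives $L^2_\cusp(\om)=\big(C_{c,\Eis}^\infty(\om)\big)^\perp=\big(L^2_\Eis(\om)\big)^\perp$, hence the orthogonal decomposition $L^2(\om)=L^2_\Eis(\om)\oplus L^2_\cusp(\om)$; since the topological splitting is independent of the metric in the equivalence class, it holds for the original metric as well. In the merely tame case one pairs instead against $\om(n^{-1})\phi(ny)$, recovering the strong condition, at the cost of a bilinear rather than Hermitian pairing.

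Finally, to split $C_c^\infty(\om)$ I would argue constructively. Given $\phi\in C_c^\infty(\om)$, its constant terms at the cusps are smooth and compactly supported, and at each cusp $P_j$ I build an incomplete Eisenstein series $E_j=E_{f_j,v_j}$ matching the constant term of $\phi$ there. The constant term of $E_{f,v}$ along its own cusp has leading term $f(y)\,v$ (from the identity coset) plus faster-decaying intertwined contributions, while its constant terms at the other cusps vanish once $T$ is large, because the support condition on $\al\in C_c^\infty(A)$ confines $E_{f,v}$ to a single cusp section and the cusp sections are disjoint. Thus the constant-term map on Eisenstein data is triangular with invertible diagonal, and one can solve for the $E_j$ so that $\phi-\sum_j E_j$ has vanishing constant terms everywhere and lies in $C_{c,\cusp}^\infty(\om)$. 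These two operations are the restrictions to $C_c^\infty(\om)$ of the orthogonal projections above, so the sum is direct; $G$-stability is immediate from $E_{f,v}(xg)=E_{f_g,v}(x)$ with $f_g\in W_P$ and from the fact that the cusp condition is imposed for all $y\in G$. Passing to closures, using density of $C_c^\infty(\om)$ in $L^2(\om)$, then recovers the second decomposition.
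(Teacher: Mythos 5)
Your proposal follows the same overall route as the paper's proof: unfold the pairing of an incomplete Eisenstein series against $\phi$, use unitarity of $\om$ in the cusps to pass to a metric that is $N$-invariant near the cusp so that the inner fibre integral becomes the (weak) constant term, and split $C_c^\infty(\om)$ by constructing incomplete Eisenstein series with prescribed constant terms -- your last paragraph is essentially the paper's part (b), and your unfolding computation is correct. But there is a genuine gap at the central step, and it is exactly the point the paper's localisation is designed to circumvent.

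You apply the identity $\sp{v,\phi(ny)}_{ny}=[v,\phi(ny)]$ for \emph{every} $y$ in the outer integral over $N\bs G$, whereas the metric you construct is $N$-invariant only over the cusp neighbourhoods. This matters because incomplete Eisenstein series are not confined to cusp sections: the datum $\al\in C_c^\infty(A)$ may be supported at small $a$, so $E_{f,v}$ can live entirely over the compact core, where the fibres $Ny$ do not meet any cusp neighbourhood. Nor can this be repaired by a better choice of metric: global left $N$-invariance together with $\Ga$-equivariance forces operators of the form $\om(\ga)^{-1}\om(\ga_0)\om(\ga)$, $\ga\in\Ga$, $\ga_0\in\Ga_N$, to act isometrically on the fibres, a unitarity-type constraint that fails for genuinely non-unitary $\om$. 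Consequently, for such $E_{f,v}$ the unfolded pairing does not reduce to a pairing against the constant term, orthogonality to cusp forms is not established, and neither inclusion of your claimed identity $L^2_\cusp(\om)=\big(L^2_\Eis(\om)\big)^\perp$ is proved; a single global \emph{orthogonal} complement is more than this argument delivers. The paper avoids this by proving orthogonality only for cusp forms \emph{supported in a cusp section}, with respect to a metric adapted to that section (then the unfolded integrand vanishes outside the region where the metric is $N$-invariant), covering $\Ga\bs G$ by $G$-translates of such sets with varying metric choices, and combining this local, metric-dependent orthogonality with the algebraic splitting of $C_c^\infty(\om)$; the resulting $\oplus$ is a topological, $G$-stable direct sum rather than an orthogonal one. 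If you restrict your orthogonality claim to cusp forms supported in cusp sections and add the covering-by-translates argument, your proof essentially becomes the paper's.
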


\begin{proof}
As $C_{c,\Eis}^\infty(\om)$ and $C_{c,\cusp}^\infty(\om)$ are dense in the corresponding $L^2$ spaces, it suffices to prove the following:
\begin{enumerate}[\rm(a)]
\item There is an open covering $\Ga\bs G=\bigcup_{j=1}^\infty U_j$ such that for every $j$ there is a choice of a metric $\sp{.,.}_x$ in the given metric class, such that $C_{c,\Eis}^\infty(\om)$ is orthogonal to the space of all $\phi\in L^2_\cusp(\om)$ with support in $U_j$.
\item 
$$
C_c^\infty(\om)=C_{c,\Eis}^\infty(\om)+ C_{c,\cusp}^\infty(\om).
$$
\end{enumerate}
To show (a), we reduce it further.
Suppose there exists an open set $U$ such that $C_{c,\Eis}^\infty(\om)$ is orthogonal to all $\phi\in L_\cusp^2(\om)$ with support in $U$.
Then, as $C_{c,\Eis}^\infty(\om)$ and $L^2_\cusp(\om)$ both are $G$-stable, the same holds for $Ux$ for any given $x\in G$.
The space $\Ga\bs G$ can be covered with countable many such translates.

For $\phi\in L^2(\om)$ we compute
\begin{align*}
\sp{\phi,E_{f,v}}
&=\int_{\Ga\bs G}\sp{\phi(x),E_{f,v}(x)}_x\ dx\\
&=\int_{\Ga\bs G}\sum_{\ga\in\Ga_n\bs \Ga}\ol f(\ga x)\sp{\phi(x),\om(\ga^{-1})v}_x\ dx\\
&=\int_{\Ga\bs G}\sum_{\ga\in\Ga_n\bs \Ga}\ol f(\ga x)\sp{\phi(\ga x),v}_{\ga x}\ dx\\
&=\int_{\Ga_N\bs G}\ol f(x)\sp{\phi(x),v}_x\ dx\\
&=\int_{\Ga_N\bs NAK}a^{2\rho}\ \ol f(nak)\sp{\phi(nak),v}_{nak}\ dn\ da\ dk\\
&=\int_{\Ga_N\bs NAK}a^{2\rho}\ \ol\al(a)\ol f(k)\sp{\phi(nak),v}_{na}\ dn\ da\ dk.
\end{align*}
Choose $t>0$ large enough, such that the set $\Ga_N\bs NA_tK$ makes a cusp section.
Then, as $\om $ is unitary in the cusps, for $a\in A_t$ we may alter the inner product so as to satisfy $\sp{.,.}_{na}=\sp{.,.}_a$.
Suppose that $\phi$ is supported in $NA_tK$ modulo $\Ga$, then we get
\begin{align*}
\sp{\phi,E_{f,v}}
&=\int_{\Ga_N\bs NAK}a^{2\rho}\ \ol\al(a)\ol f(k)\sp{\phi(nak),v}_{a}\ dx\\
&=\int_{AK}a^{2\rho}\ \ol\al(a)\ol f(k)\sp{\int_{\Ga_N\bs N}\Pr \phi(nak)\ dn,v}_{a}\ da\ dk.
\end{align*}
If $\phi$ is a cusp form, the latter is zero.
This means that the above condition is satisfied for $U$ being a cusp section.
Hence incomplete Eisenstein  series are orthogonal to cusp forms.

For (b) we use essentially the same argument.
Let $\phi\in C_c^\infty(\om)$ and define a map $A_\phi:G\to V_\om^{\Ga_N}$ by 
$$
A_\phi(x)=\int_{\Ga_N\bs N}\Pr(\phi(nx))\ dn.
$$
Then the map $\phi\mapsto A_\phi$ is $G$-equivariant.
If $\phi$ is supported in the cusp section $S=\Ga_N\bs NA_tK$ as above, then we can find an incomplete Eisenstein  series $E=E_{f,v}$ such that $A_\phi=A_E$ in that cusp section.
We can further choose the support of the function $f$ small enough so that the support of $E_{f,v}$ does not exceed the support of $f$.
This means that $\phi$ equals an incomplete Eisenstein  series plus a cusp form.
For arbitrary $\phi$, consider $R_x\phi$ for $x\in G$. One can choose $x\in G$ such that $R_x\phi$ has support in the cusp section and thus the proof is finished.
\end{proof}

\begin{definition}
Let $\Cz$ denote the center of the universal enveloping algebra $U(\Cg)$.
Let $\CA(\Ga,\om)$ denote the space of all $\om$-automorphic, smooth, rapidly decreasing, $K$-finite and $\Cz$-finite functions $G\to V_\om$.
Likewise, let $\CA(\Ga\bs G)$ denote the space of functions $f:G\to\C$ satisfying the same properties.

In both cases, let $\CA_\cusp$ denote the respective subspace of cusp forms.
\end{definition}

\begin{proposition}
Assume that $\om$ extends to a $G$-representation.
The map $\eta$, defined by $\eta\(\phi(x)\)=\om(x^{-1})\phi(x)$ restricts to bijections
\begin{align*}
\eta:\CA(\Ga,\om)&\tto\cong\CA(\Ga\bs G)\otimes V_\om,\\
\eta:\CA_\cusp(\Ga,\om)&\tto\cong\CA_\cusp(\Ga\bs G)\otimes V_\om.
\end{align*}
The space $\CA_\cusp(\Ga,\om)$ is dense in $L^2_\cusp(\Ga,\om)$ and $\CA_\cusp(\Ga\bs G)$ is dense in $L^2_\cusp(\Ga\bs G)$.
\end{proposition}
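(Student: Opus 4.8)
The plan is to treat the two halves of the statement separately: the bijection claims are formal consequences of Remarks \ref{rem1.9} and \ref{remark2.6} once one verifies that $\eta$ and its inverse preserve the defining regularity conditions, whereas the two density statements must be argued independently, since $\eta$ does not respect the $L^2$-structures. Recall from Remark \ref{rem1.9} that $\eta(\phi)(x)=\om(x^{-1})\phi(x)$ is a bijection between measurable $\om$-automorphic functions and $\CM(\Ga\bs G)\otimes V_\om$ intertwining $R$ with $R\otimes\om$, its inverse being $\psi\mapsto\om(x)\psi(x)$. It therefore suffices to show that $\phi$ lies in $\CA(\Ga,\om)$ if and only if $\eta(\phi)$ lies in $\CA(\Ga\bs G)\otimes V_\om$, i.e. that the conditions smooth, rapidly decreasing, $K$-finite, $\Cz$-finite, and cuspidal are each preserved by $\eta$ and $\eta^{-1}$.

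I would dispatch these conditions in turn. Smoothness is immediate, as $x\mapsto\om(x^{\pm1})$ is smooth. For rapid decrease, tameness of $\om$ (which holds because $\om$ extends to $G$, by the Proposition on tameness above) bounds $\norm{\om(x^{\pm1})}$ by $\exp(c\norm{\log\ul a_j(x)})$ for some $c>0$ in each cusp; multiplying a super-exponentially decaying function by such a factor leaves it super-exponentially decaying, so rapid decrease passes both ways. Since $\eta$ conjugates the $(\Cg,K)$-action of $R$ into that of $R\otimes\om$ and $V_\om$ is finite-dimensional, $K$-finiteness and $\Cz$-finiteness are preserved: tensoring a $K$-finite (resp. $\Cz$-finite) vector with the finite-dimensional module $V_\om$ yields again a $K$-finite (resp. $\Cz$-finite) vector, and applying the same with $V_\om^*$ gives the converse. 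Finally the cusp conditions correspond under $\eta$ by the computation in Remark \ref{remark2.6}, where $\om(x^{-1})$ is invertible. This gives the first bijection, and restricting to the cuspidal subspaces gives the second.

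For the density statements $\eta$ is of no use, since it destroys $L^2$-norms; instead I would invoke the Spectral Theorem. By Theorem \ref{prop2.8}(c) we have $L^2_\cusp(\Ga\bs G,\om)=\ol{\bigoplus_{\la}H_\la}$, and each $H_\la$ is admissible with finite-dimensional $K$-isotypes $H_\la(\tau)=\Eig^\infty(C_\tau,\la)$. Hence the algebraic sum $\bigoplus_{\la,\tau}H_\la(\tau)$ of these generalised eigenspaces is dense, and every vector $v$ in it is $K$-finite and $\Cz$-finite; as a generalised eigenvector of $C_\tau$, which is elliptic on $E_\tau$, it is smooth by elliptic regularity, and it is a cusp form since it lies in $L^2_\cusp$. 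It remains to show such $v$ is rapidly decreasing. I would choose an entire $f$ with $f(z^2)$ of Paley--Wiener type whose jet at the relevant spectral value $\mu$ is prescribed so that $f(C_\tau)$ acts as the identity on the finite-dimensional space $\Eig^\infty(C_\tau,\la)\ni v$; as in the proof of Theorem \ref{nextprop}, $f(C_\tau)=R(g)$ for some $g\in C_c^\infty(G)$. Then $v=R(g)v$, and Theorem \ref{prop2.8}(a) shows $R(g)v$ is rapidly decreasing. Thus $v\in\CA_\cusp(\Ga,\om)$, so $\CA_\cusp(\Ga,\om)$ is dense in $L^2_\cusp(\Ga,\om)$; the statement for $\CA_\cusp(\Ga\bs G)$ is the special case $\om=\triv$ (equivalently, the classical theorem of Gelfand--Piatetski-Shapiro).

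The main obstacle is this rapid-decrease step: producing an honest reproducing operator $R(g)$ with $g\in C_c^\infty(G)$ that fixes a given generalised eigenvector, so that the uniform decay estimate of Theorem \ref{prop2.8}(a) can be brought to bear. Matching the full jet of $f$ at $\mu$ (so that $f(C_\tau)$ is the identity, not merely invertible, on the generalised eigenspace) while keeping $f(z^2)$ of Paley--Wiener type is the only genuinely delicate point; everything else is bookkeeping with the finite-dimensional twist $V_\om$ and the intertwining of $R$ with $R\otimes\om$.
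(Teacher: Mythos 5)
Your proposal is correct, but be aware that there is essentially nothing in the paper to compare it with: the paper's entire proof of this Proposition is the single word ``Clear.'', so what you wrote is a legitimate filling-in of an argument the author leaves to the reader. The bijection half is the formal transport argument the author presumably intends: Remark \ref{rem1.9} gives the measurable bijection and the intertwining $\eta R\eta^{-1}=R\otimes\om$, Remark \ref{remark2.6} matches the cusp conditions, rapid decrease transfers because $\norm{\om(x^{\pm1})}$ grows at most exponentially in $\norm{\log\ul a_j(x)}$ on Siegel sets (for this you only need that $\om$ is a finite-dimensional representation of $G$ restricted to $A_{j,T}$; tameness itself, which concerns eigenvalues of $\om(n)$ for $n\in N$, is not what does the work), and $K$- and $\Cz$-finiteness transfer by tensoring with $V_\om$ and $V_\om^*$ --- for $\Cz$-finiteness this is Kostant's theorem on tensoring with finite-dimensional modules, a genuine but standard input that you correctly isolate. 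The density half indeed cannot come from $\eta$, and your route through Theorem \ref{prop2.8}(c) is the natural one given the paper's tools. However, the step you single out as the only genuinely delicate point --- matching the whole jet of $f$ at $\mu$ so that $f(C_\tau)$ is the \emph{identity} on $\Eig^\infty(C_\tau,\mu)$ --- is both more than you need and not actually covered by what the paper establishes: the proof of Theorem \ref{nextprop} only asserts the inclusion $\Eig^\infty(C_\tau,\mu)\subset\Eig^\infty(f(C_\tau),f(\mu))$, not the jet formula for $f(C_\tau)$ on generalised eigenvectors, which would require a separate wave-equation argument. You can avoid it entirely: choose $f$ entire with $f(z^2)$ of Paley--Wiener type and $f(\mu)\ne0$ (a scaling argument produces such an $f$). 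By the asserted inclusion, $R(g)=f(C_\tau)$ restricted to the finite-dimensional $R(g)$-invariant space $H_\la(\tau)$ is $f(\mu)$ plus a nilpotent operator, hence invertible there; so your generalised eigenvector is $v=R(g)w$ with $w\in H_\la(\tau)\subset L^2_\cusp(\Ga\bs G,\om)$, and Theorem \ref{prop2.8}(a) yields rapid decrease of $v$. (Alternatively, Harish-Chandra's classical lemma --- a smooth, $K$-finite, $\Cz$-finite function satisfies $\phi=\phi\ast\al$ for some $\al\in C_c^\infty(G)$ --- would give the reproducing operator directly.) With that modification your argument is complete at the paper's level of rigour, and it supplies strictly more detail than the paper itself does.
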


\begin{proof}
Clear.
\end{proof}

\begin{remark}
Incomplete Eisenstein series give a $G$-Homomorphism from the induced representation
$$
\mathrm{Ind}_{P}^G\(C_c^\infty(A)\)\ \otimes\ V_\om^{\Ga_N}
$$
to $L^2(\om)$.
Hence the complement of the cusp forms can be identified with this induced representation.

Now $C_c^\infty(A)$ is a subset of the Hilbert space $L^2(A)$, which, as a representation space of $A$, is a direct Hilbert integral $\int_{\what A}^\oplus\C_\la\, d\la$ of one-dimensional spaces.
If this construction were to commute with induction and with forming Eisenstein series, one would get an injection
$$
\int_{\what A}^\oplus \C_\la\ d\la\hookrightarrow L^2(\om),
$$
describing the complement of the cusp forms as a direct Hilbert integral.
However, there are normalisation and convergence issues, which through a thorough analysis of Eisenstein series can be overcome in the classical situation $\om=1$. 
Even in this situation one needs analytic continuation of Eisenstein series, and one catches some additional discrete representations at the poles of the Eisenstein series.
In the current situation $\om\ne 1$, these techniques are not available.

There are reasons to believe that they never will.
For instance, in the case $G=\SL_2(\R)$, the structure of $\Ga$ is as follows: There are hyperbolic generators $\ga_1,\dots,\ga_{2g}$ and parabolic generators $p_1,\dots,p_r$ and we have only one relation
$$
[\ga_1,\ga_2]\cdots[\ga_{2g-1},\ga_{2g}]p_1\cdots p_r=1.
$$
The number $r$ is the number of cusps.
We assume $r\ge 1$, so $\Ga$ is a free group and we can prescribe the values $\om(\ga_j)$ and $\om(p_j)$ arbitrarily.
Now consider the case, when $\om$ extends to $G$.
We get the continuous spectrum in $L^2(\om)=L^2\otimes\om$ by taking the continuous spectrum of the case $\om=1$ and translating the verticals which parametrise the continuous spectrum, by the weights of $\om$.

Deforming the representation $\om$ continuously to the trivial representation, what happens to the continuous spectrum?
The easiest would be, that the vertical lines stay vertical, but move to the imaginary axis, where they finally join.
But why should they stay vertical? Any other contour may occur or even the continuous spectrum might dissolve completely.
As long as there is no idea around, what these phenomena should entail to for non-unitary $\om$, there is little hope to describe the complement of the cusp form space in representation-theoretic terms.

\end{remark}

\begin{bibdiv} \begin{biblist}

\bib{Baumslag}{article}{
   author={Baumslag, Benjamin},
   title={A simple way of proving the Jordan-H\"{o}lder-Schreier theorem},
   journal={Amer. Math. Monthly},
   volume={113},
   date={2006},
   number={10},
   pages={933--935},
   issn={0002-9890},
   doi={10.2307/27642092},
}

\bib{Beardon}{book}{
   author={Beardon, Alan F.},
   title={The geometry of discrete groups},
   series={Graduate Texts in Mathematics},
   volume={91},
   publisher={Springer-Verlag, New York},
   date={1983},
   pages={xii+337},
   isbn={0-387-90788-2},
   doi={10.1007/978-1-4612-1146-4},
}

\bib{Borel}{book}{
   author={Borel, Armand},
   title={Introduction to arithmetic groups},
   series={University Lecture Series},
   volume={73},
   note={Translated from the 1969 French original [ MR0244260] by Lam
   Laurent Pham;
   Edited and with a preface by Dave Witte Morris},
   publisher={American Mathematical Society, Providence, RI},
   date={2019},
   pages={xii+118},
   isbn={978-1-4704-5231-5},
   doi={10.1090/ulect/073},
}

\bib{Borel-Ji}{collection}{
   author={Borel, Armand},
   author={Godement, Roger},
   author={Siegel, Carl Ludwig},
   author={Weil, Andr\'{e}},
   title={Arithmetic groups and reduction theory},
   series={CTM. Classical Topics in Mathematics},
   volume={10},
   note={Edited by Lizhen Ji;
   Translated by Wolfgang Globke, Ji, Enrico Leuzinger and Andreas Weber},
   publisher={Higher Education Press, Beijing},
   date={2020},
   pages={iv+138},
   isbn={978-7-04-053375-0},
}

\bib{Bruhat}{article}{
   author={Bruhat, Fran\c{c}ois},
   title={Distributions sur un groupe localement compact et applications \`a
   l'\'{e}tude des repr\'{e}sentations des groupes $\wp $-adiques},
   language={French},
   journal={Bull. Soc. Math. France},
   volume={89},
   date={1961},
   pages={43--75},
   issn={0037-9484},
}

\bib{Cameron}{article}{
   author={Franc, Cameron},
   title={Eisenstein metrics},
   journal={Canad. J. Math.},
   volume={75},
   date={2023},
   number={3},
   pages={778--803},
   issn={0008-414X},
   review={\MR{4586832}},
   doi={10.4153/S0008414X21000626},
}

\bib{CGT}{article}{
   author={Cheeger, Jeff},
   author={Gromov, Mikhail},
   author={Taylor, Michael},
   title={Finite propagation speed, kernel estimates for functions of the
   Laplace operator, and the geometry of complete Riemannian manifolds},
   journal={J. Differential Geom.},
   volume={17},
   date={1982},
   number={1},
   pages={15--53},
   issn={0022-040X},
}

\bib{Chernoff}{article}{
   author={Chernoff, Paul R.},
   title={Essential self-adjointness of powers of generators of hyperbolic
   equations},
   journal={J. Functional Analysis},
   volume={12},
   date={1973},
   pages={401--414},
   issn={0022-1236},
   review={\MR{369890}},
   doi={10.1016/0022-1236(73)90003-7},
}

\bib{DeiMon}{article}{
   author={Deitmar, Anton},
   author={Monheim, Frank},
   title={A trace formula for non-unitary representations of a uniform
   lattice},
   journal={Math. Z.},
   volume={284},
   date={2016},
   number={3-4},
   pages={1199--1210},
   issn={0025-5874},
   review={\MR{3563274}},
   doi={10.1007/s00209-016-1695-9},
}

\bib{DeiMonEis}{article}{
   author={Deitmar, Anton},
   author={Monheim, Frank},
   title={Eisenstein series with non-unitary twists},
   journal={J. Korean Math. Soc.},
   volume={55},
   date={2018},
   number={3},
   pages={507--530},
   issn={0304-9914},
   doi={10.4134/JKMS.j170042},
}

\bib{Dei}{article}{
   author={Deitmar, Anton},
   title={Spectral theory for non-unitary twists},
   journal={Hiroshima Math. J.},
   volume={49},
   date={2019},
   number={2},
   pages={35--249},
}

\bib{FPZeta}{article}{
   author={Fedosova, Ksenia},
   author={Pohl, Anke},
   title={Meromorphic continuation of Selberg zeta functions with twists
   having non-expanding cusp monodromy},
   journal={Selecta Math. (N.S.)},
   volume={26},
   date={2020},
   number={1},
   pages={Paper No. 9, 55},
   issn={1022-1824},
   doi={10.1007/s00029-019-0534-3},
}

\bib{FPEisenstein}{article}{
   author={Fedosova, Ksenia},
   author={Pohl, Anke},
   title={Eisenstein series twisted with non-expanding cusp monodromies},
   journal={Ramanujan J.},
   volume={51},
   date={2020},
   number={3},
   pages={649--670},
   issn={1382-4090},
   doi={10.1007/s11139-019-00205-5},
}

\bib{FPR}{article}{
   author={Fedosova, Ksenia},
   author={Pohl, Anke},
   author={Rowlett, Julie},
   title={Fourier expansions of vector-valued automorphic functions with
   non-unitary twists},
   journal={Commun. Number Theory Phys.},
   volume={17},
   date={2023},
   number={1},
   pages={173--248},
   issn={1931-4523},
   doi={10.4310/cntp.2023.v17.n1.a5},
}

\bib{Knapp}{book}{
   author={Knapp, Anthony W.},
   title={Representation theory of semisimple groups},
   series={Princeton Mathematical Series},
   volume={36},
   note={An overview based on examples},
   publisher={Princeton University Press, Princeton, NJ},
   date={1986},
   pages={xviii+774},
   isbn={0-691-08401-7},
   doi={10.1515/9781400883974},
}

\bib{Margulis}{book}{
   author={Margulis, G. A.},
   title={Discrete subgroups of semisimple Lie groups},
   series={Ergebnisse der Mathematik und ihrer Grenzgebiete (3) [Results in
   Mathematics and Related Areas (3)]},
   volume={17},
   publisher={Springer-Verlag, Berlin},
   date={1991},
   pages={x+388},
   isbn={3-540-12179-X},
   doi={10.1007/978-3-642-51445-6},
}

\bib{MertensRaum}{article}{
   author={Mertens, Michael H.},
   author={Raum, Martin},
   title={Modular forms of virtually real-arithmetic type I: Mixed mock
   modular forms yield vector-valued modular forms},
   journal={Math. Res. Lett.},
   volume={28},
   date={2021},
   number={2},
   pages={511--561},
   issn={1073-2780},
   doi={10.4310/MRL.2021.v28.n2.a7},
}

\bib{Mueller}{article}{
   author={Müller, W.},
   title={A Selberg trace formula for non-unitary twists},
   journal={Int. Math. Res. Not.},
   volume={9},
   date={2011},
   number={2},
   pages={2068--2109},
}

\bib{Rag}{book}{
   author={Raghunathan, M. S.},
   title={Discrete subgroups of Lie groups},
   series={Ergebnisse der Mathematik und ihrer Grenzgebiete [Results in
   Mathematics and Related Areas], Band 68},
   publisher={Springer-Verlag, New York-Heidelberg},
   date={1972},
   pages={ix+227},
}

\bib{Tao}{book}{
   author={Tao, Terence},
   title={Hilbert's fifth problem and related topics},
   series={Graduate Studies in Mathematics},
   volume={153},
   publisher={American Mathematical Society, Providence, RI},
   date={2014},
   pages={xiv+338},
   isbn={978-1-4704-1564-8},
   doi={10.1090/gsm/153},
}

\bib{Venkov}{article}{
   author={Venkov, A. B.},
   title={Spectral theory of automorphic functions},
   note={A translation of Trudy Mat. Inst. Steklov. {\bf 153} (1981)},
   journal={Proc. Steklov Inst. Math.},
   date={1982},
   number={4(153)},
   pages={ix+163 pp. (1983)},
   issn={0081-5438},
}

\bib{Wallach}{book}{
   author={Wallach, Nolan R.},
   title={Real reductive groups. I},
   series={Pure and Applied Mathematics},
   volume={132},
   publisher={Academic Press, Inc., Boston, MA},
   date={1988},
   pages={xx+412},
   isbn={0-12-732960-9},
}

\bib{Witte}{article}{
   author={Witte, Dave},
   title={Superrigidity of lattices in solvable Lie groups},
   journal={Invent. Math.},
   volume={122},
   date={1995},
   number={1},
   pages={147--193},
   issn={0020-9910},
   doi={10.1007/BF01231442},
}

\end{biblist} \end{bibdiv}

{\small Mathematisches Institut\\
auf der Morgenstelle 10\\
72076 T\"ubingen\\
Germany\\
\tt deitmar@uni-tuebingen.de}

\today

\end{document}